\documentclass[10pt]{article}

\usepackage{amssymb,amsthm,amsmath,tikz, booktabs,algorithm,algpseudocode, multirow}
\usepackage{algorithmicx}
\algtext*{EndIf}
\algtext*{EndFor}
\algtext*{EndWhile}

\usepackage{filecontents}
\usepackage{pgfplots}
\usepackage{pgfplotstable}
\usepackage{authblk}
\usepackage{subfigure}
\usepackage{caption}
\usetikzlibrary{decorations.pathmorphing}
\usetikzlibrary{decorations,arrows,patterns}
\usepgflibrary{decorations.pathreplacing} 

\newtheorem{Theorem}{Theorem}
\newtheorem{Lemma}{Lemma}
\newtheorem{Corollary}{Corollary}
\newtheorem{Definition}{Definition}
\newtheorem{Remark}{Remark}
\newtheorem{Example}{Example}
\newtheorem{Conclusion}{Conclusion}

\newcommand\numberthis{\addtocounter{equation}{1}\tag{\theequation}}

\pagestyle{plain}
\setlength\parindent{0cm}

\begin{document}
\title{A flow based pruning scheme for enumerative equitable coloring algorithms}
\author{A.M.C.A. Koster$^{\star}$, R. Scheidweiler$^{\star}$, and M. Tieves\footnote{$\left\{koster,scheidweiler,tieves\right\}$@math2.rwth-aachen.de} } 
\affil{Lehrstuhl II f\"ur Mathematik, RWTH Aachen University, \\ 52056 Aachen, Germany } 

\date{}
\maketitle

\begin{abstract}
An equitable graph coloring is a proper vertex coloring of a graph $G$ where the sizes of the color classes differ by at most one. The equitable chromatic number, denoted by $\chi_{eq}(G),$ is the smallest number $k$ such that $G$ admits such equitable $k$-coloring. 

We focus on enumerative algorithms for the computation of $\chi_{eq}(G)$ and propose a general scheme to derive pruning rules for them: We show how the extendability of a partial coloring into an equitable coloring can be modeled via network flows. Thus, we obtain pruning rules which can be checked via flow algorithms. Computational experiments show that the search tree of enumerative algorithms can be significantly reduced in size by these rules and, in most instances, such naive approach even yields a faster algorithm. Moreover, the stability, i.e., the number of solved instances within a given time limit, is greatly improved. \\
Since the execution of flow algorithms at each node of a search tree is time consuming, we derive arithmetic pruning rules (generalized Hall-conditions) from the network model. Adding these rules to an enumerative algorithm yields an even larger runtime improvement.
\end{abstract}

\section{Introduction}

The Vertex Coloring Problem (VCP) arises in numerous applications. For example, consider the case of assigning a set $V$ of $n$ jobs to $k$ identical machines. We assume that mutual conflicts between the jobs exist, so that, if $v,w\in V$ are in conflict, they cannot be assigned to the same machine. We represent these relations among the jobs by a graph $G=(V,E)$, denoting jobs as vertices with an edge between them if the corresponding tasks are in conflict. Given some $k\in\mathbb{N}$, VCP asks whether the jobs can be partitioned into $k$ sets $C_1,\dots,C_k$ of pairwise nonadjacent vertices each to be processed on the same machines. 

The chromatic number $\chi(G)$ defines the smallest number of machines required to process all the jobs at once. More formally, the graph $G$ admits a $k$-coloring if $V$ can be partitioned into $k$ sets $C_1,\dots,C_k$ of pairwise nonadjacent vertices and the chromatic number $\chi(G)$ is the smallest number $k$ such that $G$ has a $k$-coloring. The sets $C_i$ are called color classes and $\Pi:=(C_1,\dots,C_k,C_{k+1},\dots,C_n)$ with $C_{k+1}=\dots=C_n=\emptyset$ is called a $k$-coloring. 

With respect to fairness, i.e., concerning machine usage, a natural constraint is to distribute the number of jobs equally among the machines. That is, given some $k\in\mathbb{N}$, the task is to partition $V$ into stable sets $C_1,\dots, C_k$ such that their sizes differ by at most one. We refer to this task as the Equitable Graph Coloring Problem (ECP). The equitable chromatic number $\chi_{eq}(G)$ is defined as the smallest integer $k$ such that $G$ possesses a $k$-coloring $\Pi_{eq}$ where $||C_i|-|C_j||\leq 1$ for all color classes $i,j \in \{1,\dots,k\}$.

From an economical point of view, the knowledge of $\chi_{eq}(G)$ is important in a number of applications, e.g., in scheduling problems or, even more applied, in municipal services such as garbage collection, see~\cite{T:1973}. 

\begin{figure}
\centering
\subfigure[{Ordinary graph coloring}]{
\begin{tikzpicture}[rotate=90,scale=0.5,>=latex]
{\tikzstyle{every node}=[draw,shape=circle, inner sep=2pt];   
\path (3.5,0) node[fill=blue!50] (s){\tiny 1};}
{\tikzstyle{every node}=[draw,shape=circle, inner sep=2pt];   
\path (6,-5) node[fill=green] (u1){\tiny 2};
\path (6,-4) node[fill=green] (u2){\tiny 2};
\path (6,-3) node[fill=green] (u3){\tiny 2};
\path (6,-2) node[fill=green] (u4){\tiny 2};
\path (6,-1) node[fill=green] (u5){\tiny 2};
\path (6,0) node[fill=green] (u6){\tiny 2};
\path (6,1) node[fill=green] (u7){\tiny 2};
\path (6,2) node[fill=green] (u8){\tiny 2};
\path (6,3) node[fill=green] (u9){\tiny 2};
\path (6,4) node[fill=green] (u10){\tiny 2};
\path (6,5) node[fill=green] (u11){\tiny 2};
}
\draw (s) to (u1);
\draw (s) to (u2);
\draw (s) to (u3);
\draw (s) to (u4);
\draw (s) to (u5);
\draw (s) to (u6);
\draw (s) to (u7);
\draw (s) to (u8);
\draw (s) to (u9);
\draw (s) to (u10);
\draw (s) to (u11);
\end{tikzpicture}
}\hfill
\centering
\subfigure[{Equitable graph coloring}]{
\begin{tikzpicture}[rotate=90,scale=0.5,>=latex]
{\tikzstyle{every node}=[draw,shape=circle, inner sep=2pt];   
\path (3.5,0) node[fill=blue!50] (s){\tiny 1};}
{\tikzstyle{every node}=[draw,shape=circle, inner sep=2pt];   
\path (6,-5) node[fill=green] (u1){\tiny 2};
\path (6,-4) node[fill=green] (u2){\tiny 2};
\path (6,-3) node[fill=red] (u3){\tiny 3};
\path (6,-2) node[fill=red] (u4){\tiny 3};
\path (6,-1) node[fill=yellow] (u5){\tiny 4};
\path (6,0) node[fill=yellow] (u6){\tiny 4};
\path (6,1) node[fill=orange] (u7){\tiny 5};
\path (6,2) node[fill=orange] (u8){\tiny 5};
\path (6,3) node[fill=brown] (u9){\tiny 6};
\path (6,4) node[fill=brown] (u10){\tiny 6};
\path (6,5) node[fill=purple] (u11){\tiny 7};
}
\draw (s) to node[pos=0.5,below]{} (u1);
\draw (s) to node[pos=0.5,below]{} (u2);
\draw (s) to node[pos=0.5,below]{} (u3);
\draw (s) to node[pos=0.5,below]{} (u4);
\draw (s) to node[pos=0.5,below]{} (u5);
\draw (s) to node[pos=0.5,below]{} (u6);
\draw (s) to node[pos=0.5,below]{} (u7);
\draw (s) to node[pos=0.5,below]{} (u8);
\draw (s) to node[pos=0.5,below]{} (u9);
\draw (s) to node[pos=0.5,below]{} (u10);
\draw (s) to node[pos=0.5,below]{} (u11);
\end{tikzpicture}
}
\caption{An equitable (right) and an ordinary (left) coloring of a star. It is $\chi(G)=2$ and $\chi_{eq}(G)=7$, the color classes are indicated by the node labels.}
\label{fig:star}
\end{figure}
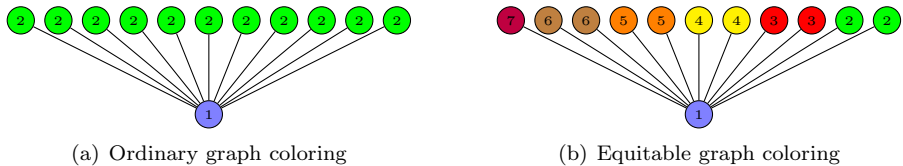

An equitable coloring of a star is presented in Figure~\ref{fig:star}. The example also shows that the difference between the chromatic number and the equitable chromatic number is, in principle, unbounded. In particular, given a star $S$ with $k\in\mathbb{N}_{\geq3}$ nodes, we have $\chi(S)=2$ and $\chi_{eq}(S)=\lceil\frac{k-1}{2}\rceil+1$.
\bigskip\\
ECP was introduced in 1973 by Meyer~\cite{M:1973}. For a summary of recent theoretical developments we refer to~\cite{KKMS:2010}. M\'{e}ndez-D\'{i}az et al. present a number of exact and heuristic algorithms for ECP in~\cite{MNS:2013,MNS:2014a,MNS:2014c,MNS:2014b}. In~\cite{MNS:2014b}, they describe an enumerative algorithm called EQDSATUR for ECP based on the DSATUR algorithm for ordinary graph coloring (see \cite{B:1979} for details), where they combine the branch and bound algorithm DSATUR with a customized pruning rule. In particular, they show that in many cases where a mixed integer linear program fails, such enumerative approach can be a reasonable alternative.
\bigskip\\
{\it Contribution/outline.} We focus on improvements to enumerative (DSATUR-based) algorithms to compute $\chi_{eq}(G)$, proposing a general scheme to derive pruning rules for these algorithms. 
We start with a detailed description of EQDSATUR as the
foundation of the following work. In Section~\ref{sec:haupt}, we show how the extendability of a partial coloring to an equitable coloring can be modeled via network flows. We remark that a similar construction has been made by de Werra~\cite{de1997restricted,de1999multiconstrained} and that this scheme also includes the pruning rule derived in~\cite{MNS:2014b}.

Computational experiments (Section~\ref{sec:compu}) show that a straight forward implementation where this scheme is evaluated by a flow problem
%
improves the original algorithm with respect to speed, number of solved instances given a fixed time limit and nodes in the search tree. 

However, the execution of flow algorithms at each node of a search tree is time consuming. Therefore, we discuss how arithmetical pruning rules can be derived from the network model. That is, we apply a result of Hoffman~\cite{hoffman1960} which yields a complete description of our network pruning model in generalized Hall-conditions. In Section~\ref{sec:hall}, we classify these conditions. We will see that there are exponentially many (non-dominated) conditions, a selection of which we apply within our algorithm. Computational Experiments show, that this yields an enumerative equitable coloring algorithm which is faster than the original algorithm in all but some of the largest instances.

\section{Definitions and Related Work}\label{sec:defs}

We repeat basic definitions and briefly describe the approach from  M\'{e}ndez-D\'{i}az et al.~\cite{MNS:2014b} to tackle ECP. For this section and the remainder of the work, we assume a graph $G=(V,E)$ to be given.  We start with some basic observations. 

Since any equitable coloring is a coloring, it is clear that $\chi(G)\leq \chi_{eq}(G)$. Let $k\in\mathbb{N}$ be fixed. If $G$ admits an equitable $k$-coloring, the sizes of the color classes are fixed.
\begin{Lemma}\label{sizeclass}
Let $k\in \mathbb{N}$, $n=|V|$ and  $p\equiv n \text{ mod } k $. If $G$ admits an equitable coloring with $k$ colors, then there are $p$ color classes of size $\left\lceil{\frac{n}{k}}\right\rceil$ and $k-p$ color classes of size 
$\left\lfloor{\frac{n}{k}}\right\rfloor.$ \hfill $\Box$
\end{Lemma}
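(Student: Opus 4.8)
The plan is to show that an equitable $k$-coloring forces each color class to have size exactly $\lceil n/k\rceil$ or $\lfloor n/k\rfloor$, and then count how many of each must occur. First I would write $n = qk + p$ with $q = \lfloor n/k\rfloor$ and $0 \le p < k$, so that $p \equiv n \bmod k$. The color classes $C_1,\dots,C_k$ partition $V$, hence $\sum_{i=1}^k |C_i| = n$. By the equitability condition, $\bigl||C_i| - |C_j|\bigr| \le 1$ for all $i,j$, so all the class sizes lie in a set of at most two consecutive integers $\{m, m+1\}$ for some $m$.

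Next I would pin down $m$. Suppose $a$ of the classes have size $m+1$ and $k - a$ have size $m$, with $0 \le a \le k$. Summing gives $n = (k-a)m + a(m+1) = km + a$. Since $0 \le a \le k$, this exhibits $n$ with quotient $m$ (when $a < k$) or forces a relabeling when $a = k$; in any case comparing $n = km + a$, $0\le a<k$ (taking $m$ one larger in the edge case $a=k$ so that the remainder is $0$) with the division algorithm $n = qk + p$, $0 \le p < k$, and using uniqueness of quotient and remainder, yields $m = q = \lfloor n/k\rfloor$ and $a = p$. Thus there are exactly $p$ classes of size $\lfloor n/k\rfloor + 1 = \lceil n/k\rceil$ (here using that $\lceil n/k\rceil = \lfloor n/k\rfloor + 1$ when $p \ne 0$, and that the statement is vacuous/consistent when $p = 0$ since then $\lceil n/k\rceil = \lfloor n/k\rfloor$) and $k - p$ classes of size $\lfloor n/k\rfloor$.

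The only mildly delicate point is the bookkeeping in the boundary case $p = 0$, where $\lceil n/k \rceil = \lfloor n/k \rfloor$ and all classes have the same size $n/k$; the formula still reads correctly as $0$ classes of the larger size and $k$ of the smaller, so no separate treatment is really needed beyond a remark. I do not anticipate a genuine obstacle here — the argument is entirely elementary, resting on the partition identity $\sum |C_i| = n$ together with uniqueness of the quotient and remainder in Euclidean division; the "hard part", such as it is, is merely stating the two-consecutive-integers observation cleanly and invoking uniqueness of $(q,p)$ to identify $m$ and $a$.
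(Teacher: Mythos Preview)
Your argument is correct; the paper itself omits the proof entirely (the statement ends with a $\Box$), treating the result as immediate, so your elementary count via Euclidean division is exactly the kind of routine verification the authors leave to the reader. There is nothing to compare against.
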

Recall that a clique is a set of pairwise adjacent vertices and that we call a set of vertices stable if it only contains pairwise nonadjacent vertices.
In the following, we adopt several notations and definitions from \cite{MNS:2014b}. A partial $k$-coloring of $G$ is given by $\Pi_{p}:=(C_1,\dots,C_k,\dots,C_n)$ such that: all $C_i\subseteq V,i=1,\dots,k$ are pairwise disjoint, $C_i=\emptyset$ for $i\geq k+1$, and the $C_i$ are stable for $i=1,\dots,k$. Note, that a partial coloring can also be completely empty, i.e., $C_i=\emptyset$ for $i=1,\dots, n.$

We denote the set of uncolored vertices by $U(\Pi_p):=V\setminus (C_1\cup\dots\cup C_k)$. In the following, we make extensive use of the following property:
\begin{Definition}[extendability property]
 A partial $k$-coloring $\Pi_p$ can be {\it extended} to an (equitable) $\tilde{k}$-coloring for $\tilde{k}\geq k$ if and only if there exists an (equitable) $\tilde{k}$-coloring  
$\Pi_{eq}=(\tilde{C}_1,\dots,\tilde{C}_n)$ such that
 $\tilde{C}_i\supseteq C_i$ for $i=1,\dots,\tilde{k}.$ 
 \end{Definition}
 
For any colored vertex $v$ with $v\in C_i$ for some $i=1,\ldots,n$, we define $\Pi_p(v):=i$ as the $\Pi_p$-color of $v$. For any uncolored vertex $v\in U(\Pi_p)$, denote the forbidden colors of $v$ as 
$$D_{\Pi_p}(v):=\{\Pi_p(w)\mid w \in V\setminus U(\Pi_p), wv \in E \},$$  the set of free colors as $$F_{\Pi_p}(v):=\{1,\dots,n\}\setminus D_{\Pi_p}(v).$$
We refer to the saturation degree of vertex $v\in V$ as $\rho_{\Pi_p}(v):=|D_{\Pi_p}(v)|$. For any uncolored vertex $v\in U(\Pi_p)$ and a color $i\in F_{\Pi_p}(v)$, we define the operation $$\Pi_p+<v,i>:=(C_1,\dots,C_i\cup\{v\},\dots,C_n)$$ as the {\it extension} of $\Pi_p$ with $<v,i>$ which assigns the uncolored vertex $v$ to a (possibly empty) color class $C_i.$\\

An exact algorithm to solve ECP is EQDSATUR as presented in \cite{MNS:2014b}. Its basic idea is to enumerate all colorings of $G$ via DSATUR (cf. \cite{B:1979}) and, thereby, eliminating non minimal and non equitable colorings to find a minimum equitable coloring. The nodes of the enumeration tree correspond to (different) partial colorings which are subsequently extended to full colorings. 
\bigskip\\
EQDSATUR (see Algorithm~\ref{alg:eqdsatur}) is initialized at the root node of the enumeration tree with two global bounds for $\chi_{eq}$ (an upper bound $\overline{k}$, a lower bound $\underline{k}$) and with an initial partial coloring $\Pi_p$. For the sake of simplicity, these bounds are not denoted in Algorithm~\ref{alg:eqdsatur}. The initial partial coloring can, e.g., be empty or be a colored clique of $G$. Then, an uncolored vertex $v\in U(\Pi_p)$ of highest saturation degree, i.e., $\text{argmax}\{\rho_{\Pi_p}(v)\}$, is chosen. Note, that in case of a tie a random tie break (or more elaborate tie breaking rules cf. \cite{MNS:2014b}, e.g., PASS-VSS) can be applied at Step~\ref{choosev}. Now, for every available color $i\in F_{\Pi_p}(v),$ a new branch is generated, which is given by the extended partial coloring $\Pi_p+<v,i>.$ In this process, each branch inherits its own upper and lower bounds on $\chi_{eq}$. In Step~\ref{chooset}, the algorithm chooses, driven by a node selection criterion, the next node of the tree and continues as for the root node. If, at some point, a partial coloring is extended to a complete coloring, the branch is terminated. Then, it is checked whether the resulting coloring is equitable and yields an improved bound $\bar{k}$. Naturally, the algorithm stops when all nodes are pruned/terminated, i.e., when all potential partial colorings have been tested.
\bigskip\\
Note, that this basic form of DSATUR can naturally be intertwined with pruning rules, for instance in  Step~\ref{choosei}, when updating the bounds $\overline{k}$ and $\underbar{k} $ for the specific branches of the enumeration tree.

\begin{algorithm}
\begin{algorithmic}[1]
\Require{Graph $G=(V,E)$ with a partial coloring $\Pi_p$}
\Ensure $T\gets\left\{\Pi_p\right\}$ \Comment{$T\widehat{=}$ enumeration tree}
\While{$T\neq\emptyset$}
\State Select $\Pi_p\in T$ \label{chooset}
\State $T\gets T\setminus\left\{\Pi_p\right\}$
\If{$U(\Pi_p):=\emptyset$}
\State Evaluate $\Pi_p$.
\Else
\State Choose $v=\arg\max\left\{\rho_{\Pi_p}(u)\mid u\in U(\Pi_p)\right\}$ \label{choosev}
\For{$i \in F_{\Pi_p}(v)$}
\If{$not\;prune(\Pi_p+<v,i>)$}\label{choosei}
\State $T\gets T\cup \left\{\Pi_p+<v,i> \right\}$ \Comment{Store extended colorings}
\EndIf
\EndFor
\EndIf
\EndWhile
\end{algorithmic}
\caption{EQDSATUR}
\label{alg:eqdsatur}
\end{algorithm}

Due to the exponential size of such an enumeration, from a practical perspective, good pruning rules are essential to reduce the size of the tree. For ECP these rules can be categorized into two groups. Pruning if
 \begin{itemize}
  \item a known upper bound for $\chi_{eq}(G)$ cannot be improved by extending the coloring of the considered node.
  \item a partial coloring cannot be extended to an equitable coloring.
 \end{itemize}

We repeat some further definitions to introduce the results of \cite{MNS:2014b}. Consider $G$ and a partial coloring $\Pi_p,$ we denote by $M(\Pi_p):=\max\{|C_i| \mid i=1,\dots,n\}$ the size of the largest color class. Denote by $T(\Pi_p):=\{i\in\{1,\dots,n\}\mid |C_i|=M(\Pi_p)\}$ the indices of the largest color classes and the cardinality $t(\Pi_p):=|T(\Pi_p)|.$ 

\begin{Theorem}\cite{MNS:2014b}\label{firstprun}
 Consider $G$ and a partial $k$-coloring $\Pi_p.$ If this partial coloring can be extended to an equitable coloring, then
 \begin{eqnarray}  n\geq (M(\Pi_p)-1)(k-t(\Pi_p))+ M(\Pi_p)t(\Pi_p)= (M(\Pi_p)-1)k + t(\Pi_p).\label{prun}\end{eqnarray}
 Or, given a lower bound $\underbar{k}$ for $\chi_{eq}(G)$, it is $n \geq  (M(\Pi_p)-1) \max\{\underline{k},k\} + t(\Pi_p)$.
\end{Theorem}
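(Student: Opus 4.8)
The plan is to prove the inequality by a straightforward counting argument that compares the sizes of the color classes under the extension against the total number of vertices $n$. Suppose $\Pi_p$ is a partial $k$-coloring that can be extended to an equitable coloring $\Pi_{eq}$. First I would observe that $\Pi_{eq}$ uses $\tilde k \geq k$ colors for some $\tilde k$, and that by Lemma~\ref{sizeclass} every color class of $\Pi_{eq}$ has size at least $\lfloor n/\tilde k\rfloor$. The key point is that extending never shrinks a color class: since $\tilde C_i \supseteq C_i$ for $i=1,\dots,k$, each of the $t(\Pi_p)$ largest color classes of $\Pi_p$ already contains $M(\Pi_p)$ vertices, hence so does its counterpart in $\Pi_{eq}$; and each of the remaining $k - t(\Pi_p)$ nonempty-or-empty classes among the first $k$ contributes at least $M(\Pi_p)-1$ vertices in $\Pi_{eq}$, because in an equitable coloring the class sizes differ by at most one, so no class can be smaller than $M(\Pi_p)-1$ once some class has size $\ge M(\Pi_p)$.

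The main step is then to sum these lower bounds over the first $k$ color classes of $\Pi_{eq}$. Since these $k$ classes are pairwise disjoint subsets of $V$, their total size is at most $n$, giving
\begin{equation*}
n \;\ge\; \sum_{i=1}^{k} |\tilde C_i| \;\ge\; t(\Pi_p)\,M(\Pi_p) + \bigl(k - t(\Pi_p)\bigr)\bigl(M(\Pi_p)-1\bigr) \;=\; (M(\Pi_p)-1)k + t(\Pi_p),
\end{equation*}
which is exactly \eqref{prun}. The small subtlety to handle carefully is the case where some of the first $k$ classes of $\Pi_p$ are empty: even then, in the equitable coloring $\Pi_{eq}$ every one of its $\tilde k \ge k$ classes has size at least $\lfloor n/\tilde k \rfloor \ge M(\Pi_{eq}) - 1 \ge M(\Pi_p) - 1$, so the bound $|\tilde C_i| \ge M(\Pi_p)-1$ still holds for those indices. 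One should also note $M(\Pi_{eq}) \ge M(\Pi_p)$, which is what licenses replacing the equitable max by $M(\Pi_p)$.

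For the second assertion, I would simply combine \eqref{prun} with the analogous inequality obtained by running the same argument with $\tilde k$ in place of $k$: applying the counting bound to all $\tilde k$ classes of $\Pi_{eq}$ yields $n \ge (M(\Pi_p)-1)\tilde k + t(\Pi_p) \ge (M(\Pi_p)-1)\,\underline{k} + t(\Pi_p)$ whenever $\tilde k \ge \underline{k}$, and since $\chi_{eq}(G) \ge \underline k$ forces $\tilde k \ge \underline k$ for any equitable coloring reachable from $\Pi_p$, we get $n \ge (M(\Pi_p)-1)\max\{\underline k, k\} + t(\Pi_p)$. I do not anticipate a genuine obstacle here; the only thing requiring care is bookkeeping around empty classes and the inequality $M(\Pi_{eq}) \ge M(\Pi_p)$, so that the per-class lower bounds are valid for exactly the index set being summed over.
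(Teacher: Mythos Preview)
Your argument is correct. The counting is clean: since the extension only enlarges classes, $M(\Pi_{eq})\ge M(\Pi_p)$, and equitability forces every one of the $\tilde k\ge k$ classes to have size at least $M(\Pi_{eq})-1\ge M(\Pi_p)-1$; summing the first $k$ (respectively all $\tilde k\ge\underline{k}$) classes gives both inequalities. Your handling of the empty-class and $\underline{k}$ bookkeeping is fine, including the observation that any equitable coloring must use at least $\chi_{eq}(G)\ge\underline{k}$ colors.

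Note, however, that the paper does not actually supply its own proof of this theorem: it is imported from~\cite{MNS:2014b} and stated without argument. What the paper \emph{does} do, in Section~\ref{weak}, is re-derive condition~\eqref{easyprun} (the equivalent form of~\eqref{prun}) as a special case of the flow framework: taking the trivial decomposition $U^1=U$, relaxing to $F(v)=C$, and reading off the min-cut at $V_N\setminus\{t\}$ in $\tilde N(G,\Pi_p,k_0)$ yields exactly the same inequality. So your elementary vertex-counting argument and the paper's cut-capacity argument arrive at the same place by different routes. Yours is self-contained and requires no auxiliary construction; the paper's derivation is less direct but situates Theorem~\ref{firstprun} as the weakest instance of the general network scheme, which is the point of Section~\ref{weak}.
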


\noindent The (first) condition of the foregoing theorem can be stated in a slightly different form. By subtracting $\sum_{i=1}^{k}|C_i|$ from both sides of the inequality, we obtain the equivalent formulation 
\begin{align}
|U(\Pi_p)|\geq \sum_{\substack{i=1 \\ |C_i|< M(\Pi_p)-1 }}^{k}(M(\Pi_p)-1-|C_i|).\label{easyprun}
\end{align}

EQDSATUR employs Theorem~\ref{firstprun} as pruning rule: At each node of the search tree, given a partial coloring $\Pi_p,$ an uncolored vertex $v\in U(\Pi_p)$ and a color $i\in F_{\Pi_p}(v)$, it is to check whether \begin{itemize} 
\item[(i)] $i\leq \overline{k}-1$ and 
\item[(ii)] $n\geq (M(\Pi_p')-1)\cdot \max\{\underline{k},k\} + t(\Pi_p')$ with $\Pi_p':= \Pi_p+<v,i>$. 
\end{itemize}
If any of the two conditions is not fulfilled, the branch is pruned. More informal,\vspace*{-0.3cm}\\

\begin{minipage}{0.45\textwidth}
the theorem translates to: ``There have to be enough uncolored vertices, to fill up all color classes to the size of the currently largest one minus one''. A visual interpretation of the result is given in Figure~\ref{fig:the1}. In the figure, a coloring is depicted in which the largest color class (blue) has size three, the other three color classes have size one. Since there
\end{minipage}
\hfill
\begin{minipage}{0.45\textwidth}
\begin{center}
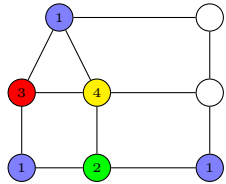

\begin{tikzpicture}[scale=0.5]
\node[draw, circle, fill = blue!50,inner sep=2pt] (p1) at (0,0) {\tiny 1};
\node[draw, circle, fill = green,inner sep=2pt] (p2) at (2,0) {\tiny 2};
\node[draw, circle, fill = red,inner sep=2pt] (p3) at (0,2) {\tiny 3};
\node[draw, circle, fill = yellow,inner sep=2pt] (p4) at (2,2) {\tiny 4};
\node[draw, circle, fill = blue!50,inner sep=2pt] (p5) at (1,4) {\tiny 1};
\node[draw, circle, fill = blue!50,inner sep=2pt] (p6) at (5,0) {\tiny 1};
\node[draw, circle,inner sep=2pt] (p7) at (5,2) {\textcolor{white}{\tiny 2}};
\node[draw, circle,inner sep=2pt] (p8) at (5,4) {\textcolor{white}{\tiny 3}};

\draw (p1) -- (p2);
\draw (p2) -- (p4);
\draw (p4) -- (p3);
\draw (p3) -- (p1);

\draw (p3) -- (p5);
\draw (p4) -- (p5);

\draw (p6) -- (p2);
\draw (p7) -- (p4);
\draw (p8) -- (p5);

\draw (p6) --(p7);
\draw (p8) --(p7);
\end{tikzpicture} 
\captionof{figure}{A non-extendable coloring}
\label{fig:the1}
\end{center}
\end{minipage}\vspace*{0.125cm}\\
are only two uncolored nodes left, the coloring cannot be extended because not all color classes can be filled up to include at least two nodes each (which requires at least three additional nodes).

\section{Flows and Partial Colorings} \label{sec:haupt}

In this section, we present a flow-based scheme to model necessary conditions on partial colorings to be extendable. By means of this scheme Theorem~\ref{firstprun} and additional, stronger conditions for extendability can be derived. Note that the presented construction is similar to a network model presented in~\cite{de1997restricted,de1999multiconstrained}. Throughout this section, we assume a graph $G$ together with a partial coloring $\Pi_p$ with $k$ colors to be given. Therefore, we consider the simplified notation 
$D(v):=D_{\Pi_p}(v), F(v):=F_{\Pi_p}(v), M:=M(\Pi_p),\rho(v):=\rho_{\Pi_p}(v)$ and $U:=U(\Pi_p).$

\subsection{Modeling Extendability via Network Flows }
We model the extendability of a partial coloring $\Pi_p$ to an equitable $k_0$ coloring by a flow. That is, we describe a network where the vertices correspond to (uncolored) nodes of $G$ and to the $k_0$ available colors, respectively to their color classes. The coloring is extendable if a flow from each uncolored vertex to a color (class) exists.

Let a number of colors $k_0\leq n$ be given. We emphasize that $k_0$ is not necessarily equal to $k$ as the number of colors used in $\Pi_p$. Let $U$ be the union of $l$ disjoint vertex sets $U^1,\dots,U^l$. 
Moreover, denote by $\alpha^j=\alpha(G\left[U^j\right])$ an upper bound on the size of the maximum stable set, i.e., on the stability number, of the subgraph induced by $U^j$.
In order to model the extendability of $\Pi_p$ we define the directed network $N(G,\Pi_p,k_0):=(V_N,A_N)$. For a visualization, we refer 
to Figure~\ref{network}. 

Formally, the construction writes as follows: Let $C=\{1,\ldots,k_0\}$ be a set of vertices corresponding to the considered colors. With a slight abuse of notation, let $U=\bigcup_{j=1}^{l}U^j$ be a set of vertices corresponding to the uncolored nodes in $G$. For each $j=1,\ldots,l$, let $F^j$ be a copy of $C$ and let $F=\bigcup_{j=1}^{l}F^j$. We write $f^j(i)$ for the vertex in $F^j$ corresponding to (a copy of) color $i$. Then, adding a source node $s$ and a sink node $t$, the vertex set $V_N$ is defined as
\begin{align*}
V_N:=\{s\} \cup \bigcup_{j=1}^{l} U^j   \cup \bigcup_{j=1}^{l} F^j  \cup  C \cup \{t\}. 
\end{align*}
The arc set $A_N$ is then composed of the arc sets $A_1,\ldots A_4$. Hereby, $A_1$ connects $s$ to $U$, i.e., $A_1:=\{(s,v) \mid v\in U\}$ and  $A_2$ connects $U$ to $F$, that is
\begin{align*}
A_2:= \bigcup_{j=1}^{l} \left\{(v,f^j(i))\mid \ v\in U_j  \text{ and } i\in F(v) \right\}.
\end{align*}
Similar, $A_3$, connects each node in $F$ to the corresponding color in $C$
\begin{align*}
A_3:= \bigcup_{j=1}^{l} \left\{(f^j(i),i)\mid \ i=1,\dots,k_0 \right\}
\end{align*}
and $A_4$ connects $C$ to $t$, that is $A_4:=\{(i,t)\mid  i=1,\dots,k_0\}$.
%
%

\begin{figure}
\begin{center}
\begin{tikzpicture}[scale=0.3125,>=latex]
\fill[color=blue!10] (0,-10) rectangle (6,15); 
\path (3,13.25) node[above] {$A_1$};
\path (3,11.5) node[above]  {{\tiny $\begin{array}{l} U\!\!=\!\!1 \\ L\!\!=\!\!0\end{array}$}};
\fill[color=blue!5] (6,-10) rectangle (12,15);
\path (9,13.25) node[above] {$A_2$};
\path (9,11.5) node[above]  {{\tiny $\begin{array}{l} U\!\!=\!\!1 \\ L\!\!=\!\!0\end{array}$}};
\fill[color=blue!10] (12,-10) rectangle (18,15); 
\path (15,13.25) node[above] {$A_3$};
\path (15,11.5) node[above]  {{\tiny $\begin{array}{l} U\!\!=\!\!a^j \\ L=\!\!0\!\!\end{array}$}};
\fill[color=blue!5] (18,-10) rectangle (24,15);
\path (21,13.25) node[above] {$A_4$};
\path (21,11) node[above]  {{\tiny $\begin{array}{l} U\!\!=\!\!\lceil\frac{n}{k_0}\rceil\!\!-\!\!|C_i| \\ L\!\!=\!\!\lfloor\frac{n}{k_0}\rfloor\!\!-\!\!|C_i|\end{array}$}};
\draw (6,8) ellipse (1 and 2.5); 
\path (7,10) node[above] {$U^1$}; 
\draw (6,2) ellipse (1 and 2.5);
\path (7,4) node[above] {$U^2$};
\draw (6,-7) ellipse (1 and 2.5);
\path (7,-5) node[above] {$U^l$};
\draw(12,8) ellipse (1 and 2.5);
\draw(12,2.5) ellipse (1 and 2.5);
\draw(12,-7) ellipse (1 and 2.5);
\draw(18,1) ellipse (1 and 2.5);
\path (10.75,10) node[above] {$F^1$}; 
\path (10.75,4) node[above] {$F^2$};
\path (10.75,-5) node[above] {$F^l$};
\path (18,3.5) node[above] {$C$};
{\tikzstyle{every node}=[draw,shape=circle, inner sep=0pt,minimum size=6pt];   

\node[inner sep=0pt,minimum size=10pt,fill=gray!50](s) at (0,0) {$s$};

\node(u11) at (6,10) {};
\node(u12) at (6,9) {};
\node(u13) at (6,6) {};
\node(u21) at (6,4) {};
\node(u22) at (6,3) {};
\node(u23) at (6,0) {};
\node(u31) at (6,-5) {};
\node(u32) at (6,-6) {};
\node(u33) at (6,-9) {};

\node[fill=blue!50](v11) at (12,9.5) {};
\node[fill=green](v12) at (12,8.5) {};
\node[fill=purple](v13) at (12,6.25) {};
\node[fill=blue!50](v21) at (12,4) {};
\node[fill=green](v22) at (12,3) {};
\node[fill=purple](v23) at (12,0.75) {};
\node[fill=blue!50](v31) at (12,-5.5) {};
\node[fill=green](v32) at (12,-6.5) {};
\node[fill=purple](v33) at (12,-8.75) {};

\node[fill=blue!50](w1) at (18,3) {};
\node[fill=green](w2) at (18,2) {};
\node[fill=purple](w3) at (18,-1) {};

\node[inner sep=0pt,minimum size=10pt,fill=gray!50](t) at (24,0) {$t$};

}
\draw (s) to (u11);
\draw (s) to (u12);
\draw (s) to (u13);
\draw (s) to (u21);
\draw (s) to (u22);
\draw (s) to (u23);
\draw (s) to (u31);
\draw (s) to (u32);
\draw (s) to (u33);
\draw[thick, decorate,decoration={zigzag,segment length=7,amplitude=2}] 
(7,8) to node[pos=0.5,below]{} (11,8);
\draw[thick, decorate,decoration={zigzag,segment length=7,amplitude=2}] 
(7,3) to node[pos=0.5,below]{} (11,3);
\draw[thick, decorate,decoration={zigzag,segment length=7,amplitude=2}] 
(7,-7) to node[pos=0.5,below]{} (11,-7);
\path (9,-1) node {\tiny arc $(u,f^j(i))$};
\path (9,-2) node {\tiny exists};
\path (9,-3) node {\tiny iff $i\in F(u).$ };
\draw (8.8,-0.5) to (8,3);
\draw (9.2,-0.5) to (10,8);
\draw (9,-3.5) to (9,-7);

\draw (v11) to (w1);
\draw (v12) to (w2);
\draw (v13) to (w3);

\draw (v21) to (w1);
\draw (v22) to (w2);
\draw (v23) to (w3);

\draw (v31) to (w1);
\draw (v32) to (w2);
\draw (v33) to (w3);

\draw (w1) to (t);
\draw (w2) to (t);
\draw (w3) to (t);

\path (6,7.75) node {\bf $\vdots$};
\path (6,1.75) node {\bf $\vdots$};
\path (6,-1.75) node {\bf $\vdots$};
\path (6,-7.25) node {\bf $\vdots$};

\path (12,7.7) node {\bf $\vdots$};
\path (12,2.2) node {\bf $\vdots$};
\path (12,-1.75) node {\bf $\vdots$};
\path (12,-7.3) node {\bf $\vdots$};

\path (18,0.5) node {\bf $\vdots$};

\end{tikzpicture} 
 \caption{The network $N(G,\Pi_p,k):=(V_N,A_N)$. The ``uncolored'' vertices are depicted in white, the vertices corresponding to colors are depicted in their respective colors. $L$ and $U$ indicate the arc capacities.}
\label{network}
\end{center}
\end{figure}
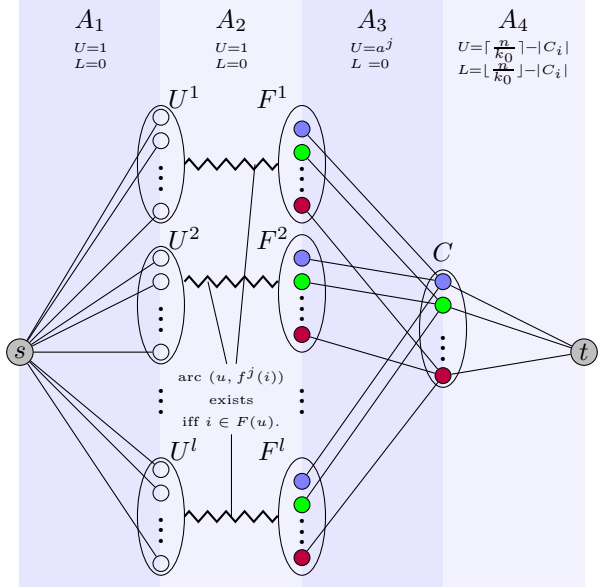
We define the corresponding arc capacities as follows: Let minimum/maximum arc capacities be $c:A_N\mapsto \mathbb{N}_0$ and $d:A_N\mapsto \mathbb{N}_0$ with 
$$\arraycolsep=1pt \begin{array}{lcll}
 c(a) :=  \lfloor{\frac{n}{k_0}}\rfloor - |C_i| & \text{ and } & d(a):= \lceil{\frac{n}{k_0}}\rceil - |C_i|, & \text{if } a=(i,t)\in A_4, \\
 c(a) := 0 & \text{ and } & d(a):=\alpha^j, & \text{if } a=(f^j(i),i) \in A_3,\\
 c(a) := 0 & \text{ and } & d(a):=1, &  \text{for all other edges} \\
\end{array}$$
for $i=1,\dots,k_0$ and for $j=1,\dots,l$. 

\begin{Theorem}\label{main}
Consider $G$ and its partial $k$-coloring $\Pi_p$. Assume $k_0\geq k$ and that $M\leq \lceil{\frac{n}{k_0}}\rceil.$ Suppose that the set of uncolored vertices $U$ is the union of $l$ disjoint vertex sets $U^1,\dots, U^l.$ Let $\alpha^j$ be given for $j=1,\dots,l$.\\ If $\Pi_p$ can be extended to an equitable $k_0$-coloring, then
 the network $N(G,\Pi_p,k_0)$ has an admissible flow of value $|U|$.
\end{Theorem}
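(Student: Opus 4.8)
The plan is to start from an equitable $k_0$-coloring $\Pi_{eq}=(\tilde C_1,\dots,\tilde C_{k_0},\emptyset,\dots)$ that extends $\Pi_p$ (which exists by hypothesis) and to use it to construct an integral flow $x$ on $N(G,\Pi_p,k_0)$ of value $|U|$ that respects the lower capacities $c$ and upper capacities $d$. Every uncolored vertex $v\in U$ has been assigned some color in $\Pi_{eq}$; since $\Pi_{eq}$ extends $\Pi_p$, that color lies in $F(v)=F_{\Pi_p}(v)$, so for each $v\in U^j$ we may route one unit of flow along the path $s\to v\to f^j(c(v))\to c(v)\to t$, where $c(v):=\Pi_{eq}(v)\in\{1,\dots,k_0\}$ is the color class $v$ lands in. Summing these $|U|$ unit paths gives a candidate flow; the remainder of the argument is to check it is admissible.

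The key verifications, in order, are: (1) \emph{arcs in $A_1$ and $A_2$}: each carries either $0$ or $1$ unit by construction, so the capacity bound $L=0$, $U=1$ is met, and the $A_2$-arc used actually exists because $\Pi_{eq}(v)\in F(v)$. (2) \emph{arcs in $A_3$}: the arc $(f^j(i),i)$ carries exactly $|\{v\in U^j:\Pi_{eq}(v)=i\}|$ units; since the vertices of $U^j$ colored $i$ form a stable set in $G[U^j]$ (they lie in the single stable class $\tilde C_i$), this number is at most $\alpha(G[U^j])\le\alpha^j=d(f^j(i),i)$, so the upper bound holds (the lower bound $0$ is trivial). (3) \emph{arcs in $A_4$}: the arc $(i,t)$ carries $\sum_{j}|\{v\in U^j:\Pi_{eq}(v)=i\}|=|\tilde C_i|-|C_i|$ units, because $\tilde C_i$ consists of the originally colored vertices $C_i$ plus exactly those uncolored vertices recolored $i$. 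By Lemma~\ref{sizeclass}, $|\tilde C_i|\in\{\lfloor n/k_0\rfloor,\lceil n/k_0\rceil\}$, so $|\tilde C_i|-|C_i|$ lies between $\lfloor n/k_0\rfloor-|C_i|=c(i,t)$ and $\lceil n/k_0\rceil-|C_i|=d(i,t)$. Here the assumption $M\le\lceil n/k_0\rceil$ guarantees $d(i,t)\ge 0$ for every $i$ (equivalently, that no already-colored class is too big), so the capacities are well-defined nonnegative integers. (4) \emph{flow conservation} at each $v\in U^j$, each $f^j(i)$ and each $i\in C$ is immediate since flow enters and leaves along the same set of unit paths; conservation at $s$ (out-flow $|U|$) and $t$ (in-flow $\sum_i(|\tilde C_i|-|C_i|)=n-\sum_i|C_i|=|U|$) confirms the value is $|U|$.

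The one genuinely substantive point — the rest being bookkeeping — is step (3)/(4) on the $A_3$ and $A_4$ capacities: one must argue that the uncolored vertices landing in a common class $\tilde C_i$ restricted to one block $U^j$ form a stable set (hence are bounded by $\alpha^j$), and that the total number landing in $\tilde C_i$ is exactly $|\tilde C_i|-|C_i|$, which is where equitability (via Lemma~\ref{sizeclass}) and the hypothesis $M\le\lceil n/k_0\rceil$ enter to make the lower and upper capacities on $A_4$ consistent. I would state these as two short observations and then simply assemble the flow; no optimization or max-flow/min-cut machinery is needed for this direction, since we are only proving necessity.
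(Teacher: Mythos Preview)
Your proposal is correct and follows essentially the same approach as the paper: construct the flow by routing, for each uncolored vertex $v\in U^j$, one unit along $s\to v\to f^j(\Pi_{eq}(v))\to \Pi_{eq}(v)\to t$, and then verify the capacity constraints on $A_1$--$A_4$ using stability for $A_3$ and Lemma~\ref{sizeclass} for $A_4$. The paper's proof is organized via an explicit table of flow values rather than the path-routing language, but the construction and the verification steps are identical.
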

\begin{proof} 
The transformation of an extension $\Pi$ of $\Pi_p$ into an admissible flow $x$ of value $|U|$ in $N(G,\Pi_p,k)$ is easy (compare Table \ref{tab:flowcolor}).
\begin{table}
\renewcommand{\arraystretch}{0.6}
 \begin{center}
\setlength{\tabcolsep}{8pt}
 \caption{An equitable coloring $\Pi$ transformed into an admissible flow $x.$}

\label{tab:flowcolor}
 \begin{tabular}{clclc}
\toprule
 Arc & \multicolumn{4}{c}{ Flow value } \\ \midrule
 $A_1$ & $x_{(s,v)}$  & $=$ & $1,$ &  \\ 
 $A_2$ & $x_{(v,f^j(i))}$  &=& $1,$ & if $\Pi(v)=i$ \\
       & $x_{(v,f^j(i))}$ &=& $0,$ & if $\Pi(v)\neq i$ \\
 $A_3$ & $x_{(f^j(i),i)}$ & $=$ & $ |\{v\in U^j\mid \Pi(v)=i\}|,$ & \\
 $A_4$ & $x_{(i,t)}$ & $=$ & $\sum\limits_{j=1}^{l}x_{(f^j(i),i)}.$ & \\
\bottomrule
 \end{tabular}
 \end{center}

\end{table}
It is clear that the flow obeys the capacity restrictions for the arcs in $A_1$. 
We check the capacities on $A_2, A_3$ and $A_4:$\\
Since $\Pi$ extends $\Pi_p,$  a node $v\in U$ can only be colored with colors from $F(v)$. $A_2$ contains only one arc connecting $v\in U^j$ to $f^j(i)$ for each color $i \in F(v).$ Thus, the flow on $A_2$ is well defined and obeys the capacity constraints.  

By construction, all of the nodes $f^j(i)$ are connected to the node corresponding to the color $i\in C$. Hence, the flow on $(f^j(i),i)\in A_3$ is well defined. Since in any (equitable) coloring, each color class yields a stable set, the flow $|\{v\in U^j\mid \Pi(v)=i\}|$ on the arc $(f^j(i),i),$ obeys to the given capacity, which is chosen as upper bound on the stability number.

For any arc $(i,t)$ of $A_4$, Lemma \ref{sizeclass} gives us the size of the corresponding color classes $C_i$ of $\Pi,$ namely $\lfloor{\frac{n}{k_0}}\rfloor$ or $\lceil{\frac{n}{k_0}}\rceil.$ Subtracting the nodes which are already colored by $\Pi_p,$ we obtain $\lfloor{\frac{n}{k_0}} \rfloor - |C_i|$ or $\lceil{\frac{n}{k_0}}\rceil - |C_i|.$ Hence, $\Pi$ assigns either $\lfloor{\frac{n}{k_0}}\rfloor - |C_i| $ or $\lceil{\frac{n}{k_0}}\rceil - |C_i|$ vertices from $U$ to $i.$
Therefore,  $$\sum\limits_{j=1}^{l}x_{(f^j(i),i)} = \sum\limits_{j=1}^{l} |\{v\in U^j\mid \Pi(v)=i\}| \in  \left\{\left\lfloor{\frac{n}{k_0}}\right\rfloor - |C_i|,  \left\lceil{\frac{n}{k_0}}\right\rceil - |C_i| \right\}$$ and all capacity constraints hold. Flow balance holds by construction and the flow given by Table~\ref{tab:flowcolor} has value $|U|$. Since $\sum_{v\in U}c((s,v))=|U|$, the flow is maximum. 
\end{proof}

The condition $\lceil{\frac{n}{k_0}}\rceil\geq M$ is worth explaining: If $M$ (the size of a largest color class) is greater than  $\lceil{\frac{n}{k_0}}\rceil,$ there is no equitable $k_0$-coloring extending $\Pi_p,$ because we need color classes of size at most  $\lceil{\frac{n}{k_0}}\rceil.$

Theorem~\ref{main} establishes a necessary criterion when a partial $k$-coloring is extendable to an equitable $k_0$-coloring for fixed $k_0\geq k$ and thus, it can be extended to serve as a pruning rule within EQDSATUR:

Given a partial coloring $\Pi_p$ in the search tree of EQDSATUR and the collection of uncolored vertices $U,$ then $U$ is decomposed into sets $U^j$ and the condition is to be tested for any $k\leq k_0\leq \max\{\tilde k\in \mathbb{N}\mid M\leq \lceil{\frac{n}{\tilde k}}\rceil, \tilde k\leq \bar{k}\}$. If none of the flow problems yields the desired flow, the current branch is pruned. For later reference, we rephrase the pruning scheme in Algorithm~\ref{alg:prune}. If the algorithm returns true, the current branch in the EQDSATUR can be pruned.
\begin{algorithm}
\begin{algorithmic}[1]
\Require{Partial Coloring $\Pi_p$, upper bound $\bar k \geq \chi_{eq}$}
\Ensure bool $p \gets true$
\State Determine decomposition $U=\bigcup_{j=1}^{l}U^j$ \label{chooseDecomp}
\State Determine bounds $\alpha^j\quad\forall j=1,\ldots, l$
\For{$k_0 \in \{k\ldots,\max\{\tilde k\in \mathbb{N}\mid M\leq \lceil{\frac{n}{\tilde k}}\rceil, \tilde k\leq \bar{k}\}\}$}
\If{$N(G,\Pi_p,k)$ has an admissible flow of value $|U|$} \label{solveflow}
\State $p \gets false$
\State break
\EndIf
\EndFor
\State \textbf{Return} p
\end{algorithmic}
\caption{prune($\Pi_p)$}
\label{alg:prune}
\end{algorithm}

In this context, the choice of the decomposition of $U$ (compare Algorithm~\ref{alg:prune}, step~\eqref{chooseDecomp}) is a central element for the success of the pruning scheme. However, it is not clear what a `best' decomposition of $U$ for the ECP is. Still, given a decomposition into sets $U^j$, the knowledge of good bounds $\alpha^j$ is crucial from an algorithmic point of view. The discussion of different decompositions is the topic of the next subsection. 
 

\subsection{Decomposing the set of uncolored vertices}\label{choices}

As as first step, we establish a relation between the choices of $U^j$ and $\alpha^j$, and the result stated in Theorem~\ref{prun}.  
After that, we discuss a setting, for which Theorem~\ref{main} yields a characterization of extendability. In this case, \textit{if and only if} Algorithm~\ref{alg:prune} returns true, the current branch in the EQDSATUR can be pruned. Finally, the previous findings are combined to achieve a strong setting for practical use. 

\subsubsection{A direct and fast approach} \label{weak}
We consider the trivial decomposition $U^1:=U$ and let $\alpha^1:=|U|$. 
Then, for selected $k_0,$ the network $N(G,\Pi_p,k_0)$ boils down to
$\tilde{N}(G,\Pi_p,k_0) $ with
\begin{align*}
&V_{\tilde N}:= \{s\}\cup U \cup C \cup \{t\}\quad\textnormal{and}\\
&A_{\tilde N}:= A_1 \cup \{(v,i)\mid \ v\in U  \text{ and } i\in F(v) \} \cup A_4.
\end{align*}
%
and flow capacities $c:A_{\tilde{N}}\mapsto \mathbb{N}_0$ and $d:A_{\tilde{N}}\mapsto \mathbb{N}_0$ with 
$$\arraycolsep=1pt \begin{array}{lcll}
 c(a) :=  \lfloor{\frac{n}{k_0}}\rfloor - |C_i| & \text{ and } & d(a):= \lceil{\frac{n}{k_0}}\rceil - |C_i|, & \text{for } a=(i,t), \\
 c(a) := 0 & \text{ and } & d(a):=1, &  \text{for all other edges.} \\
\end{array}.$$

Since the capacities of arcs in $A_3$ are $|U|=|U^1|,$ they do not restrict the flow any more and can be omitted. 

As a further relaxation, assume that $F(v)=C$ for all $v\in U$. In other words, we assume that any vertex can still be colored with all colors. For a potential application of Theorem~\ref{main}, we are looking for a maximum flow of value $|U|$ in the corresponding network $\tilde{N}$. A necessary condition for the existence of such flow is
$$|U|\geq \sum_{\substack{i=1\\|C_i|<M}}^{k}(M-1-|C_i|),$$ 
Implying that there are enough uncolored vertices to reach equitability, resp. to fill up the color classes to the size of the largest one minus $1$.

Assume the contrary. Consider the cut induced in $\tilde{N}$ by $V\setminus \left\{t\right\}$. There, the overall required (minimum) flow is 
\begin{align*}
 \sum_{(i,t)\in A_4}\left(\left\lfloor \frac{n}{k_0}\right\rfloor - |C_i|\right)\geq \sum_{i=1}^{k_0}\left(\left\lceil \frac{n}{k_0}\right\rceil - |C_i|-1\right)\geq \sum_{\substack{i=1\\|C_i|<M}}^k \left( M- |C_i|-1\right),
\end{align*}
which can not be fulfilled if less flow leaves the source. So, we obtain the condition from~\cite{MNS:2014b} for extendability (compare Theorem~\ref{firstprun}). 
This condition offers two practical advantages: It is not depending on $k_0$ and it is easy to verify since it does not require to solve any flow problem.

\subsubsection{Strong decompositions} \label{strong}

We consider the case that $U$ can be decomposed into $l$ pairwise nonadjacent cliques $U^j$ and hence, it is $\alpha^j=1$ for all $j=1,\ldots,l$. 
Then, for fixed $k_0,$ the flow model $N(G,\Pi_p,k_0)$ even yields a characterization of extendability:
\begin{Theorem}\label{exact}
Let $k_0\geq k$ and $M\leq \lceil{\frac{n}{k_0}}\rceil.$ Suppose that the set $U$ decomposes into pairwise non-adjacent cliques $U^1,\dots,U^l$. The partial coloring can be extended to an equitable $k_0$-coloring if and only if the network $N(G,\Pi_p,k_0)$ has an admissible flow of value $|U|.$
\end{Theorem}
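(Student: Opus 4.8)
The plan is to prove the two implications separately. The forward direction ($\Pi_p$ extendable $\Rightarrow$ admissible flow of value $|U|$) is already covered by Theorem~\ref{main}, since the hypotheses here (pairwise nonadjacent cliques with $\alpha^j=1$) are a special case of the decomposition hypothesis there; so nothing new is needed. The real work is the converse: given an admissible flow $x$ of value $|U|$ in $N(G,\Pi_p,k_0)$, I would construct an equitable $k_0$-coloring $\Pi$ extending $\Pi_p$. By the integrality of network flows with integer capacities, I may assume $x$ is integral, so every arc in $A_1$ and $A_2$ carries flow $0$ or $1$.

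The construction is the natural inverse of Table~\ref{tab:flowcolor}: since the flow has value $|U|$ and $\sum_{v\in U} c((s,v)) = |U|$, every arc $(s,v)$ is saturated, hence by flow conservation at $v$ exactly one arc $(v, f^j(i))$ leaves $v$ with flow $1$; set $\Pi(v) := i$ for that color. I must then check four things. First, $\Pi$ is well-defined on $U$ and assigns only free colors, since an arc $(v,f^j(i))\in A_2$ exists only if $i\in F(v)$; combined with $\tilde C_i \supseteq C_i$ this ensures no conflict between an uncolored vertex and an already-colored neighbor. Second — and this is where the clique hypothesis is essential — I must verify that each new color class is stable. Flow conservation at $f^j(i)$ gives $x_{(f^j(i),i)} = |\{v\in U^j : \Pi(v)=i\}|$, and the capacity $d((f^j(i),i)) = \alpha^j = 1$ forces at most one vertex of each clique $U^j$ to receive color $i$. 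Since the $U^j$ are pairwise nonadjacent, the set of all uncolored vertices colored $i$ is therefore stable; together with stability of $C_i$ and the free-color property this makes $\tilde C_i := C_i \cup \{v\in U : \Pi(v)=i\}$ a stable set. Third, equitability: flow conservation at color node $i$ gives $x_{(i,t)} = \sum_{j} x_{(f^j(i),i)} = |\tilde C_i| - |C_i|$, and the capacity bounds $\lfloor n/k_0\rfloor - |C_i| \le x_{(i,t)} \le \lceil n/k_0\rceil - |C_i|$ yield $|\tilde C_i| \in \{\lfloor n/k_0\rfloor, \lceil n/k_0\rceil\}$, so all class sizes differ by at most one. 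Fourth, a counting/consistency check: summing the $A_4$ lower bounds and using that the total flow out of $s$ is exactly $|U|$ forces $\sum_i |\tilde C_i| = n$, i.e. every vertex is colored and the $\tilde C_i$ partition $V$; here the hypothesis $M \le \lceil n/k_0\rceil$ guarantees the capacity intervals on $A_4$ are nonempty so this is not vacuous.

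The main obstacle, if any, is purely bookkeeping: making sure the flow genuinely exhausts $U$ and that the resulting sizes add up to $n$ rather than leaving some vertices uncolored or overfilling a class — this is the content of the fourth check and follows from comparing $\sum_i x_{(i,t)} = |U|$ with $\sum_i (\lceil n/k_0\rceil - |C_i|)$ and $\sum_i(\lfloor n/k_0\rfloor - |C_i|)$ and invoking $n = \sum_i |C_i| + |U|$ together with $p \equiv n \bmod k_0$ as in Lemma~\ref{sizeclass}. Everything else is a direct translation between flow-conservation equations and the defining properties of an equitable coloring, with the single conceptual point being that $\alpha^j = 1$ on the cliques is exactly what promotes the necessary condition of Theorem~\ref{main} to a sufficient one.
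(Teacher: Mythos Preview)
Your proposal is correct and follows essentially the same route as the paper: invoke Theorem~\ref{main} for the forward direction, and for the converse read an (integral) admissible flow of value $|U|$ back into a coloring, using the capacity bound $\alpha^j=1$ on $A_3$ together with the pairwise nonadjacency of the cliques to guarantee stability, and the $A_4$ bounds to guarantee equitability. Your write-up is in fact more careful than the paper's (which is quite terse and leaves the equitability and counting checks implicit); one small slip is that where you write $\sum_{v\in U} c((s,v)) = |U|$ you want the \emph{upper} capacities $d$, since in the paper's notation $c\equiv 0$ on $A_1$ --- the saturation argument of course still goes through.
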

\begin{proof}
 Suppose that 
$N$ has a maximum flow $x$ of size $|U|$. The capacities 
\begin{align*}
 c(a)= 0 \leq x_{(f^j(i),i)} \leq d(a)=\alpha^j=1 
\end{align*}
for all arcs $a\in A_3$ and for all $i=1,\ldots,k_0$, $j=1,\ldots,l$ ensure that each vertex $v\in U^j$ gets a different color $i\in F(v)$.
Hence, the flow transforms into a proper equitable $k_0$-coloring $\Pi$ of $G$, namely 
\begin{align*}
 \Pi=\Pi_p+\sum\limits_{i=1}^{k_0}\sum\limits_{j=1}^{l}\sum_{\substack{ v\in U^j: \\  x_{(v,f^j(i) )}=1 }} <v,i>. 
\end{align*}

If the network $N$ has no admissible flow of value $|U|,$ Theorem~\ref{main} shows that there is no equitable $k_0$-coloring which extends $\Pi_p.$
\end{proof}

In the case where $U$ decomposes into non adjacent cliques, the scheme corresponds to the model employed by de Werra in~\cite{de1997restricted,de1999multiconstrained} applied to equitable coloring.

\subsubsection{A mixed approach}\label{mixed} 

Up to now we have seen two decomposition variants of $U.$ It is clear that the first one yields a very practicable but weak approach, while the second one provides a very impractical (since in general $U$ will not decompose into non-adjacent cliques) but strong approach with respect to pruning rules for EQDSATUR. Therefore, we propose a heuristic combination of the two  approaches.
\bigskip\\
We decompose the set $U$ into non-adjacent cliques $U^1,\dots,U^j$ and some remaining vertices $U^0$. The idea is that the part of the auxiliary network $N$ containing the cliques will provide a relatively strong pruning condition, while the remaining part can only contribute to a weak bound. 
With respect to Theorem~\ref{main}, we set $\alpha^{j}:=1$ for $j=1,\ldots,l$ and $\alpha^{0}:=|U^0|$ for the remaining noes. In this setting, the nodes in $U^1,\dots,U^l$ can have connections to nodes in $U^0$, such that $N$ does not give a sufficient but a necessary condition for the extension property. However, in the course of EQDSATUR, $|U|$ decreases. If at some point, $U^0=\emptyset$, $N$ collapses to the strong setting and yields a characterization of extendability at which the current branch can be terminated at latest.
In this decomposition, only cliques $U^j$ with $|U^j|\geq2$ have to be considered because singletons can be put into $U^0$ without weakening the pruning.
\begin{Example}\label{ex:mixed}
An example of the mixed approach is given in Figure~\ref{fig:robertgraph}. In this figure, we consider a graph with $12$ nodes, which we color with a DSATUR algorithm. An equitable coloring with four colors exists and we assume that four is a known upper bound at the beginning of the algorithm. Therefore, the algorithm searches for a better coloring, e.g., one that utilizes only three colors. The algorithm colors the node in sequence, starting with one, two and so forth. As soon as a color is selected for node one, Theorem~\ref{main} diretly implies that the partial coloring cannot be extended. Hence, the algorithm immediatly terminates. This is easy to see, as each color class would contain exactly four vertices, two of which are required to be taken by vertices within the two cliques. Assuming that node one is in color class $C_{i}$ for some color $i$, all remaining nodes are connected to node one, i.e., the color class $C_{i}$ cannot get more than three nodes. This is a contradiction.\\
Solely based on Theorem~\ref{firstprun}, the non-extendability could not have been verified as early. An algorithm with just Theorem~\ref{firstprun} as pruning rule needs to visit hundreds of partial colorings.
\end{Example}
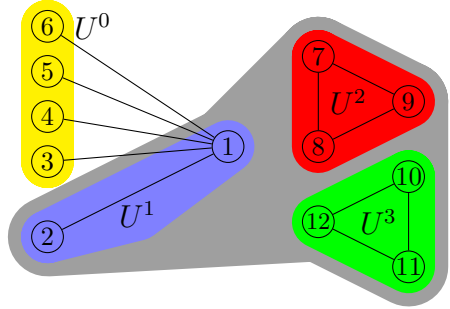
\begin{figure}

\begin{center}
\begin{tikzpicture}[scale=0.4,>=latex]
\tikzstyle{vertex} = [fill,shape=circle,node distance=40pt]
\tikzstyle{edge} = [ opacity=.5, line cap=round, line join=round, line width=25pt]

{\tikzstyle{every node}=[draw, shape=circle, inner sep=0pt,minimum size=12pt];   
\path (0,0) node (s){1};}
{\tikzstyle{every node}=[draw, shape=circle, inner sep=0pt,minimum size=12pt];   
\path (-6,-3) node (u1){};
\path (-6,-0.5) node (u2){};
\path (-6,1) node (u3){};
\path (-6,2.5) node (u4){};
\path (-6,4) node (u5){};

\path (3,3) node (c1){};
\path (3,0) node (c2){};
\path (6,1.5) node (c3){};

\path (6,-1) node (d1){};
\path (6,-4) node (d2){};
\path (3,-2.5) node (d3){};

}

\begin{scope}[transparency group,opacity=.5]
\fill[edge,opacity=1,color=yellow] (u2.center) -- (u3.center) -- (u4.center) -- (u5.center) -- (u2.center);
\draw[edge,opacity=1,color=yellow,line width=20pt] (u2.center) -- (u3.center) -- (u4.center) -- (u5.center) -- (u2.center);
\end{scope}

\begin{scope}[transparency group,opacity=.5]
\fill[edge,opacity=1,color=gray!75] (c3.center) -- (c1.center) -- (c2.center) -- (c3.center) -- (d1.center) -- (d2.center) -- (d3.center) --  (u1.center) -- (s.center) -- (c1.center);
\draw[edge,opacity=1,color=gray!75,line width=30pt](c3.center) -- (c1.center) -- (c2.center) -- (c3.center) -- (d1.center) -- (d2.center) -- (d3.center) --  (u1.center) -- (s.center) -- (c1.center);
\end{scope}

\begin{scope}[transparency group,opacity=.8]
\fill[edge,opacity=1,color=blue!50] (u1.center) -- (s.center) -- (-3,-2.25) --  (u1.center);
\draw[edge,opacity=1,color=blue!50,line width=20pt] (u1.center) -- (s.center) -- (-3,-2.25) -- (u1.center) ;
\end{scope}

\begin{scope}[transparency group,opacity=.8]
\fill[edge,opacity=1,color=red] (c1.center) -- (c2.center) -- (c3.center) -- (c1.center);
\draw[edge,opacity=1,color=red,line width=20pt] (c1.center) -- (c2.center) -- (c3.center) -- (c1.center) ;
\end{scope}

\begin{scope}[transparency group,opacity=.8]
\fill[edge,opacity=1,color=green] (d1.center) -- (d2.center) -- (d3.center) -- (d1.center);
\draw[edge,opacity=1,color=green,line width=20pt] (d1.center) -- (d2.center) -- (d3.center) -- (d1.center) ;
\end{scope}


\draw (s) to node[pos=0.5,below]{} (u1);
\draw (s) to node[pos=0.5,below]{} (u2);
\draw (s) to node[pos=0.5,below]{} (u3);
\draw (s) to node[pos=0.5,below]{} (u4);
\draw (s) to node[pos=0.5,below]{} (u5);
\draw (c1) to node[pos=0.5,below]{} (c2);
\draw (c2) to node[pos=0.5,below]{} (c3);
\draw (c3) to node[pos=0.5,below]{} (c1);
\draw (d1) to node[pos=0.5,below]{} (d2);
\draw (d2) to node[pos=0.5,below]{} (d3);
\draw (d3) to node[pos=0.5,below]{} (d1);

\path (-3,-2.25) node {\large $U^1$};
\path (4,1.5) node {\large $U^2$};
\path (5,-2.5) node {\large $U^3$};
\path (-4.5,4) node {\large $U^0$};

{\tikzstyle{every node}=[draw, shape=circle, inner sep=0pt,minimum size=12pt];   
\path (0,0) node (s){1};}
{\tikzstyle{every node}=[draw, shape=circle, inner sep=0pt,minimum size=12pt];   
\path (-6,-3) node (u1){$2$};
\path (-6,-0.5) node (u2){$3$};
\path (-6,1) node (u3){$4$};
\path (-6,2.5) node (u4){$5$};
\path (-6,4) node (u5){$6$};

\path (3,3) node (c1){$7$};
\path (3,0) node (c2){$8$};
\path (6,1.5) node (c3){$9$};

\path (6,-1) node (d1){$10$};
\path (6,-4) node (d2){$11$};
\path (3,-2.5) node (d3){\small $12$};
}
\end{tikzpicture} 

\caption{A visualization of Example~\ref{ex:mixed} of the mixed approach. The set of non-adjacent cliques (blue, red green) is indicated by the gray area. The yellow nodes are not contained in any clique.}
\label{fig:robertgraph}
\end{center}
\end{figure}

Again, we remark that a `best' decomposition of $U$ is not known, but intuitively, it is desirable to cover as many vertices as possible by the cliques. 
Such a decomposition can always be obtained greedily, e.g., as it is shown in Algorithm~\ref{alg:nonadjcliques}. Note that it requires a few additional definitions: For a subset $U\subseteq V$ resp. a vertex $v\in V$ we denote by $\delta(U):=\{w\in V\setminus U \mid vw\in E \text{ for some } v\in U\}$ and by $\delta(v):=\{w\in V\setminus \{v\} \mid vw\in E \}$ the open neighborhood of $U$ resp. of $v$. 
We denote the degree of a vertex $v\in V$ by $\deg_G(v).$

\begin{algorithm}
\caption{Find Non Adjacent Cliques}\label{alg:nonadjcliques}
\begin{algorithmic}[1]
\Require{Graph $G=(V,E)$ with a partial coloring $\Pi_p$}
\Ensure $i\gets1$, $U^0\gets \emptyset$, $U^i\gets \emptyset$
\While{$U\neq\emptyset$}
\State $v\gets\underset{v\in U}{argmax}\left\{\deg_G(v)\right\}$
\State $U^i\gets U^i\cup\left\{v\right\}$ \Comment{construct $U^i$ as clique}
\While{$\exists v \in \bigcap_{w\in U^i}\delta(w)$}
\State $v\gets\underset{v\in \bigcap_{w\in U^i}}{argmax}\left\{\deg_G(v)\right\}$
\State $U^i\gets U^i\cup\left\{v\right\}$
\EndWhile
\State $U\gets U\setminus\left(U^i\cup \delta(U^i)\right)$
\State $U^0\gets U^0\cup \delta(U^i)$ \Comment{neighbors of $U^i$ go to $U^0$}
\State $i \gets i+1$, $U^i\gets \emptyset$
\EndWhile
\end{algorithmic}
\end{algorithm}

We point out that at the very first iteration, with a slight increase in computation time, the algorithm can be executed for different starting nodes to obtain a better result. 

\section{Extendability and Hall Conditions}\label{sec:hall}

\begin{figure}
\begin{center}
\begin{tikzpicture}[scale=0.3125,>=latex]
\fill[color=blue!5] (6,-10) rectangle (12,14);
\path (9,12) node[above] {$A_2$};
\fill[color=blue!10] (12,-10) rectangle (18,14); 
\path (15,12) node[above] {$A_3$};
\draw (6,8) ellipse (1 and 2.5); 
\path (7,10) node[above] {$U^1$}; 
\draw (6,2) ellipse (1 and 2.5);
\path (7,4) node[above] {$U^2$};
\draw (6,-7) ellipse (1 and 2.5);
\path (7,-5) node[above] {$U^0$};
\draw(12,8) ellipse (1 and 2.5);
\draw(12,2.5) ellipse (1 and 2.5);
\draw(12,-7) ellipse (1 and 2.5);
\draw(18,1) ellipse (1 and 2.5);
\path (10.75,10) node[above] {$F^1$}; 
\path (10.75,4) node[above] {$F^2$};
\path (10.75,-5) node[above] {$F^0$};
\path (18,3.5) node[above] {$C$};
\draw [decorate,decoration={brace,amplitude=10pt},xshift=-4pt,yshift=0pt]
(5,-3.125) -- (5,10.5) node [black,midway,xshift=-1.25cm] 
{cliques $U^j$};
\draw[gray, dashed, thick] (0,-3.35) -- (22,-3.35);

{\tikzstyle{every node}=[draw,shape=circle, inner sep=0pt,minimum size=6pt];   


\node(u11) at (6,10) {};
\node(u12) at (6,9) {};
\node(u13) at (6,6) {};
\node(u21) at (6,4) {};
\node(u22) at (6,3) {};
\node(u23) at (6,0) {};
\node(u31) at (6,-5) {};
\node(u32) at (6,-6) {};
\node(u33) at (6,-9) {};

\node[fill=blue!50](v11) at (12,9.5) {};
\node[fill=green](v12) at (12,8.5) {};
\node[fill=purple](v13) at (12,6.25) {};
\node[fill=blue!50](v21) at (12,4) {};
\node[fill=green](v22) at (12,3) {};
\node[fill=purple](v23) at (12,0.75) {};
\node[fill=blue!50](v31) at (12,-5.5) {};
\node[fill=green](v32) at (12,-6.5) {};
\node[fill=purple](v33) at (12,-8.75) {};

\node[fill=blue!50](w1) at (18,3) {};
\node[fill=green](w2) at (18,2) {};
\node[fill=purple](w3) at (18,-1) {};


}
\draw[thick, decorate,decoration={zigzag,segment length=7,amplitude=2}] 
(7,8) to node[pos=0.5,below]{} (11,8);
\draw[thick, decorate,decoration={zigzag,segment length=7,amplitude=2}] 
(7,3) to node[pos=0.5,below]{} (11,3);
\draw[thick, decorate,decoration={zigzag,segment length=7,amplitude=2}] 
(7,-7) to node[pos=0.5,below]{} (11,-7);

\draw (v11) to (w1);
\draw (v12) to (w2);
\draw (v13) to (w3);

\draw (v21) to (w1);
\draw (v22) to (w2);
\draw (v23) to (w3);

\draw (v31) to (w1);
\draw (v32) to (w2);
\draw (v33) to (w3);



\path (6,7.75) node {\bf $\vdots$};
\path (6,1.75) node {\bf $\vdots$};
\path (6,-1.75) node {\bf $\vdots$};
\path (6,-7.25) node {\bf $\vdots$};

\path (12,7.7) node {\bf $\vdots$};
\path (12,2.2) node {\bf $\vdots$};
\path (12,-1.75) node {\bf $\vdots$};
\path (12,-7.3) node {\bf $\vdots$};


\path (18,0.5) node {\bf $\vdots$};


\end{tikzpicture} 
\caption{(Modified) Structure of the network $N(G,\Pi,k_0):=(V_N,A_N)$. The source $s$ and the target $t$ have been omitted. Note that the superscript $0$ refers to nodes not corresponding to a clique.}
\label{fig:modflow}
\end{center}
\end{figure}
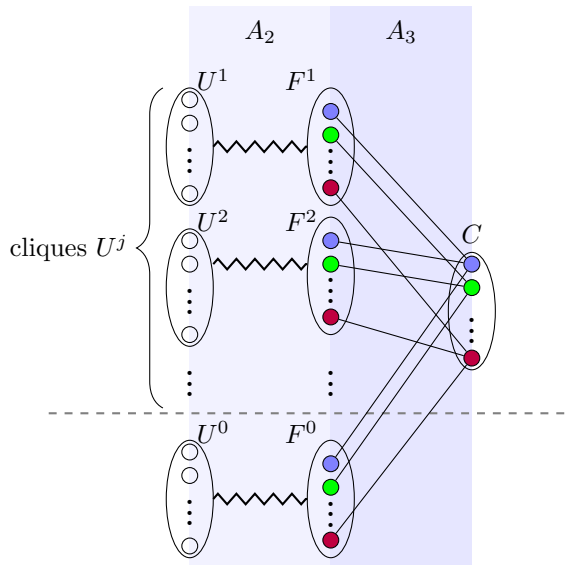

In this section, we derive arithmetical pruning rules (compare Theorem~\ref{firstprun}) from the flow model, describing necessary conditions for a partial coloring to be extendable. These can then be incorporated into an algorithm, without requiring to explicitly solve a flow problem. For this purpose, we focus on the network model for the extendability of the mixed approach without the source node $s$ and the target node $t$, compare Figure~\ref{fig:modflow}. We formulate the flow problem as LP and apply a theorem of Hoffman~\cite{hoffman1960} to derive said conditions. This way, we obtain generalized Hall conditions which give conditions for the existence of such flow.
\bigskip\\
In the course of this section, we show that three families of conditions are already necessary and sufficient. These are presented in Conclusion~\ref{fam:1}, in Conclusion~\ref{fam:2}, and in Conclusion~\ref{fam:3}. 
\bigskip\\
We require some additional notation: 
let $F^{C}=\bigcup_{i=1}^{l}F^i$ be the intermediate nodes corresponding to all uncolored vertices in the cliques $j=1,\ldots,l$. For two sets of nodes $W_1,W_2\subseteq V$, we write $e(W_1,W_2)$ as the \textit{number of arcs} going from $W_1$ to $W_2$. Finally, for any set of nodes $W\subseteq V$, let 
\begin{align*}
&\delta^+(W):=\{v\in V\mid wv\in A \text{ and } w \in W\} \text{ and} \\
&\delta^-(W):=\{v\in V\mid vw\in A \text{ and } w \in W\}
\end{align*}
denote the positive respectively the negative neighbors of $W$ and let $x\in\mathbb{R}^m.$ If
\begin {subequations}
\begin{align}
&1 \leq\sum_{\substack{v\in F\\uv\in A_2}}x_{uv}\leq 1 &&\forall\; u\in U \label{mip:flow:first}\\
&0 \leq x_{vi} -  \sum_{\substack{u\in U(\Pi_p)\\uv\in A_2}}x_{uv} \leq 0 &&\forall\; v\in F \text{ and } vi \in A_3 \\
&-\left\lceil \frac{n}{k_0} \right\rceil+|C_i| \leq - \sum_{\substack{v\in F\\vi\in A_4}}x_{vi}\leq-\left\lfloor\frac{n}{k_0}\right\rfloor+|C_i| &&\forall\;i=1,\dots,k_0
\end{align}
and
\begin{align}
& 0\leq x_{uv} \leq 1 &&\forall\;u\in U, v\in F \text{ with } uv\in F \\
& 0\leq x_{uv} \leq 1 && \forall\; u\in F^{C}, v\in C\\
& 0\leq x_{uv} \leq |U^0| && \forall\; u \in F^0, v\in C \label{mip:flow:last}
\end{align}
\end{subequations}
holds, then $x$ is a feasible flow vector of the simplified model, compare Figure~\ref{fig:modflow}. In the following, let $a\leq \mathcal{A}x \leq b$ with $c\leq x \leq d$ denote Model~\eqref{mip:flow:first}~--~\eqref{mip:flow:last} in matrix notation for appropriately chosen $a,\mathcal{A},b,c,d$. Note that since  Model~\eqref{mip:flow:first}~--~\eqref{mip:flow:last} is a max flow problem, the matrix $\mathcal{A}$ is totally unimodular. We recall the following result:

\begin{Theorem}[Hall Conditions, Hoffman~\cite{hoffman1960}] \label{theo:hoffman}
Let  $\mathcal{A}\in \{0,\pm 1\}^{n \times m}$ be a totally unimodular matrix. The system 
\begin{equation*}
 a\leq \mathcal{A}x \leq b,\qquad c\leq x \leq d
\end{equation*}
has a (integral) solution if and only if 
\begin{align}
&\sum_{i:w_i=-1}a_i + \sum_{i:v_i=1}c_i \leq \sum_{i:w_i=1}b_i + \sum_{i:v_i=-1}d_i
\end{align}
holds for all $v\in\{0,\pm 1\}^m, w\in \{0,\pm 1\}^n $ with $w\mathcal{A}=v$.
\end{Theorem}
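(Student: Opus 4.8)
The plan is to prove the two implications separately: necessity is an elementary sandwich estimate, while sufficiency is obtained by contraposition, combining a transposition form of Farkas' lemma with the total unimodularity of $\mathcal{A}$; this is the classical LP-duality-plus-TU route.

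For necessity, suppose $x$ satisfies $a\le\mathcal{A}x\le b$ and $c\le x\le d$, and fix $w\in\{0,\pm1\}^n$, $v\in\{0,\pm1\}^m$ with $w\mathcal{A}=v$. Reading off, for each row $i$, the bound $(\mathcal{A}x)_i\le b_i$ when $w_i=1$ and $-(\mathcal{A}x)_i\le -a_i$ when $w_i=-1$, and summing, gives $w(\mathcal{A}x)\le\sum_{i:w_i=1}b_i-\sum_{i:w_i=-1}a_i$. Applying the analogous reasoning to the variable bounds gives $vx\ge\sum_{i:v_i=1}c_i-\sum_{i:v_i=-1}d_i$. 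Since $w(\mathcal{A}x)=(w\mathcal{A})x=vx$, chaining the two estimates and rearranging is exactly the asserted inequality; nothing beyond $\mathcal{A}x$ lying in the stated box is used here.

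For sufficiency I argue the contrapositive: assuming the system has no solution, I produce a $\{0,\pm1\}$-pair $(w,v)$ violating the Hall inequality. First I fold the variable bounds into the matrix, setting $\widetilde{\mathcal{A}}:=\bigl(\begin{smallmatrix}\mathcal{A}\\ -I\end{smallmatrix}\bigr)$, which is again totally unimodular, with bound vectors $\widetilde a:=(a,-d)$ and $\widetilde b:=(b,-c)$; the system becomes $\widetilde a\le\widetilde{\mathcal{A}}x\le\widetilde b$, and pairs $(w,v)$ with $w\mathcal{A}=v$ correspond precisely to vectors $u=(w,v)$ with $u\widetilde{\mathcal{A}}=0$. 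Rewriting the two-sided system as $\bigl(\begin{smallmatrix}\widetilde{\mathcal{A}}\\ -\widetilde{\mathcal{A}}\end{smallmatrix}\bigr)x\le\bigl(\begin{smallmatrix}\widetilde b\\ -\widetilde a\end{smallmatrix}\bigr)$ and invoking Farkas' lemma, infeasibility produces a nonnegative rational $(p^+,p^-)$ with $(p^+-p^-)\widetilde{\mathcal{A}}=0$ and $p^+\!\cdot\widetilde b-p^-\!\cdot\widetilde a<0$. Using $\widetilde a\le\widetilde b$, replacing $(p^+,p^-)$ by the conformal pair $(y^+,y^-)$, where $y:=p^+-p^-$ and $y^\pm$ denote its positive and negative parts, only decreases the left-hand side; clearing denominators then yields an \emph{integral} $y$ with $y\widetilde{\mathcal{A}}=0$ and $\sum_{y_i>0}y_i\widetilde b_i<\sum_{y_i<0}|y_i|\,\widetilde a_i$.

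The crucial step is then to decompose this integral kernel vector conformally into $\{0,\pm1\}$-vectors, and this is the only place total unimodularity is genuinely needed. I would show, by induction on $\|y\|_1$, that every integral $y$ with $y\widetilde{\mathcal{A}}=0$ can be written as $y=\sum_k u_k$ with each $u_k\in\{0,\pm1\}^{n+m}$ conformal to $y$ (that is, $(u_k)_i y_i\ge 0$ for all $i$) and satisfying $u_k\widetilde{\mathcal{A}}=0$: the cone of vectors conformal to $y$ and annihilating $\widetilde{\mathcal{A}}$ is cut out by $\widetilde{\mathcal{A}}^{\!\top}$ together with sign- and zero-constraints on the coordinates, an enlarged system that is still totally unimodular, so each of its extreme rays is a $\{0,\pm1\}$-vector by Cramer's rule; peeling one extreme ray off $y$ keeps $y$ integral and conformal and strictly decreases $\|y\|_1$. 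Substituting the decomposition into $\sum_{y_i>0}y_i\widetilde b_i<\sum_{y_i<0}|y_i|\widetilde a_i$ and splitting over $k$ by conformality shows that some $u_k=(w,v)$ obeys $\sum_{(u_k)_i=-1}\widetilde a_i>\sum_{(u_k)_i=1}\widetilde b_i$; translating the $\widetilde a,\widetilde b$ entries back into $a,b,c,d$ turns this into the negation of the Hall inequality for the pair $(w,v)$, which satisfies $w\mathcal{A}=v$ since $u_k\widetilde{\mathcal{A}}=0$. When $a,b,c,d$ are integral, the ``(integral) solution'' addendum is immediate because a totally unimodular system with integral bounds describes an integral polyhedron. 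The main obstacle is exactly this conformal $\{0,\pm1\}$-decomposition: it fails for general integer matrices (for instance on the line spanned by $(2,-2,1)$, whose only $\{0,\pm1\}$ point is the origin), so the extreme-ray argument — in particular tracking the sign rows that preserve total unimodularity of the enlarged system — is the part that needs to be carried out with care.
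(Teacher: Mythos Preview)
The paper does not give its own proof of this theorem: it is quoted as a classical result of Hoffman~\cite{hoffman1960} and used as a black box, so there is no ``paper's proof'' to compare against.

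That said, your argument is correct and is essentially the standard route to Hoffman's theorem. Necessity is the obvious sandwich estimate. For sufficiency, folding the variable bounds into $\widetilde{\mathcal{A}}=\bigl(\begin{smallmatrix}\mathcal{A}\\-I\end{smallmatrix}\bigr)$, applying Farkas to obtain a rational certificate, making it conformal via $y=p^+-p^-$, and then conformally decomposing the integral kernel vector $y$ into $\{0,\pm1\}$ kernel vectors using total unimodularity is exactly the classical LP-duality-plus-TU proof. Your identification of the conformal decomposition as the crux is right, and your extreme-ray argument (the cone $\{u:\widetilde{\mathcal{A}}^{\top}u=0,\ u\text{ conformal to }y\}$ is pointed, its defining system is TU, hence its extreme rays are $\{0,\pm1\}$ by Cramer) is sound; peeling off one such ray keeps $y$ integral and conformal and strictly drops $\|y\|_1$, so the induction terminates.

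One small point worth making explicit: the step ``using $\widetilde a\le\widetilde b$, replacing $(p^+,p^-)$ by $(y^+,y^-)$ only decreases the left-hand side'' presupposes $a\le b$ and $c\le d$. This is harmless (if either fails the system is vacuously infeasible and a single unit vector $w=\pm e_i$ or $v=\pm e_j$ already violates a Hall condition), but you should say so rather than leave it implicit.
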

 Since many Hall type theorems can be reduced to it, Hoffman referred to his result as \textit{``the most general theorem of the Hall type''}. In our case, it is $c\equiv 0$ and hence, the condition boils down to
\begin{align}
\sum_{i:w_i=-1}a_i \leq \sum_{i:w_i=1}b_i + \sum_{i:v_i=-1}d_i. \label{eq:hall} 
\end{align} 
Furthermore, note that:
\begin{Remark}\label{coroll:basic}
Let $w \in \{0,\pm 1\}^n$ be given. It is $w\mathcal{A}=v\in \{0,\pm 1\}^m$ if and only if $w_i=w_j$ or $w_i=0$ or $w_j=0$ for all $ij\in A$. 

A visual interpretation of the condition is as follows: We choose nodes from the network and assign to them a sign. These vertices yield the vector $w$ (non chosen vertices get the value zero). If we choose neighboring vertices, they must have the same sign. 
By $v=w\mathcal{A}$, we obtain a signed selection of edges (according to the coefficients $\pm1$ of $v$). Therefore, by selecting a node set with plus or minus one, $v$ corresponds to the incoming, respectively the outgoing arcs of this set. Both vectors $v,w$ together define one of the inequalities in~\eqref{eq:hall}.
\end{Remark}
In the following, we systematically investigate different choices of vertex sets and the corresponding generalized Hall conditions. Therefore, let $R_+, R_- \subseteq U, S_+,S_- \subseteq F,$ and $T_+,T_-\subseteq C$ such that 
\begin{align}
\delta^+(R_+)\cap S_-=\delta^+(R_-)\cap S_+=\delta^+(S_+)\cap T_-=\delta^+(S_-)\cap T_+=\emptyset.
\end{align}
Accordingly, define the vector
\begin{align}
\bar w:=1_{R_+ \cup S_+ \cup T_+  } - 1_{R_- \cup S_- \cup T_-  }, \label{vector:w}
\end{align}
 where $1_X$ denotes the incidence vector of the vertex set $X$. By construction, all possible choices of the vector $\bar w$ correspond exactly to the possible choices of the vectors $v$ in Theorem~\ref{theo:hoffman} and we obtain the following reformulation:  

\begin{Corollary}\label{theo:hall:2}
The flow problem~\eqref{mip:flow:first}~--~\eqref{mip:flow:last} has a solution if and only if for all $R_+, R_- \subseteq U, S_+,S_- \subseteq V_1,$ and $T_+,T_-\subseteq C$ with
\begin{align*}
\delta^+(R_+)\cap S_-=\delta^+(R_-)\cap S_+=\delta^+(S_+)\cap T_-=\delta^+(S_-)\cap T_+=\emptyset
\end{align*}
it holds that
\begin{align*}
|R_-|+\sum_{i \in T_-}\left(-\left\lceil \frac{n}{k_0} \right\rceil+|C_i| \right) \leq & |R_+|  +\sum_{i \in T_+}\left(-\left\lfloor \frac{n}{k_0} \right\rfloor+|C_i| \right)\\ 
& + e(R_-,F\setminus S_-) \\
& + e(U\setminus R_+,S_+) \\ 
& + e(S_-\cap F^{C},C\setminus T_-)\numberthis \label{eq:hall2} \\
& +|U^0| \cdot e(S_-\cap F^0,C\setminus T_-) \\
& + e(F^{C}\setminus S_+,T_+)\\
& +|U^0| \cdot e(F^0\setminus S_+,T_+).
\end{align*} 
\end{Corollary}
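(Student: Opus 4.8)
The plan is to specialize Theorem~\ref{theo:hoffman} (in the reduced form~\eqref{eq:hall}, valid since $c\equiv 0$) to the network of Figure~\ref{fig:modflow} and to translate every term appearing there into combinatorial quantities of the coloring. By Remark~\ref{coroll:basic}, a valid $w\in\{0,\pm1\}^n$ is exactly a choice of a signed vertex subset that is ``sign-consistent'' along every arc; writing the chosen positive/negative vertices layer by layer as $R_\pm\subseteq U$, $S_\pm\subseteq F$, $T_\pm\subseteq C$, the sign-consistency requirement along the arcs of $A_2$ and $A_3$ becomes precisely the four emptiness conditions $\delta^+(R_+)\cap S_-=\delta^+(R_-)\cap S_+=\delta^+(S_+)\cap T_-=\delta^+(S_-)\cap T_+=\emptyset$ stated just before~\eqref{vector:w}. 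So the first step is to verify that $\bar w$ in~\eqref{vector:w} ranges over all admissible $w$, which is just Remark~\ref{coroll:basic} applied to the three-layer structure, bearing in mind that $N$ has no arcs internal to $U$, $F$, or $C$ and none from $U$ to $C$ directly, so only the cross-layer arcs impose constraints.

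Next I would compute the induced edge selection $v=w\mathcal A$ and read off which primal variables $x_{uv}$ receive coefficient $+1$ versus $-1$: an arc $uv$ gets $v_{uv}=w_u-w_v$ (up to the sign convention of $\mathcal A$, where each column has a $+1$ in the tail-constraint row and a $-1$ in the head-constraint row, together with its own box constraint row). Hence $v_{uv}=+1$ precisely when the arc \emph{leaves} the $+$-set or \emph{enters} the $-$-set, and $v_{uv}=-1$ in the mirror case. The emptiness conditions guarantee no arc simultaneously leaves $R_+$ and enters $S_-$, etc., so $v\in\{0,\pm1\}^m$ as required. Then~\eqref{eq:hall} instructs me to sum $a_i$ over rows $i$ with $w_i=-1$, sum $b_i$ over rows with $w_i=1$, and sum $d_i$ over columns (arcs) with $v_i=-1$. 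The left side collects the lower bounds of the equality/inequality rows indexed by the negative vertices: for $u\in R_-$ the flow-conservation row~\eqref{mip:flow:first} contributes its lower bound $1$, giving $|R_-|$; for $i\in T_-$ the capacity row contributes its lower bound $-\lceil n/k_0\rceil+|C_i|$ (the negated upper arc capacity into $t$), giving $\sum_{i\in T_-}(-\lceil n/k_0\rceil+|C_i|)$; the $F$-layer rows are homogeneous ($a_i=b_i=0$) and contribute nothing. The right side collects $|R_+|$ from the $+$-vertices in $U$, $\sum_{i\in T_+}(-\lfloor n/k_0\rfloor+|C_i|)$ from the $+$-vertices in $C$, and the box upper bounds $d$ over all arcs with $v_{uv}=-1$; those arcs are exactly $\{(r,f)\in A_2: r\in R_-,\ f\notin S_-\}$, $\{(u,s)\in A_2: u\notin R_+,\ s\in S_+\}$, the arcs of $A_3$ leaving $S_-$ but not entering $T_-$, and the arcs of $A_3$ entering $T_+$ but not leaving $S_+$, with box capacities $1$ on $A_2$, $1$ on $A_3$ leaving $F^C$, and $|U^0|$ on $A_3$ leaving $F^0$. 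Counting these with the $e(\cdot,\cdot)$ notation produces exactly the seven summands on the right of~\eqref{eq:hall2}, the $|U^0|$-weighted ones coming from the $F^0$ arcs. Finally, since $\mathcal A$ is totally unimodular, Theorem~\ref{theo:hoffman} gives feasibility iff all these inequalities hold, and by Theorem~\ref{main} (together with Theorem~\ref{exact} when $U^0=\emptyset$) feasibility of the flow is the extendability-necessary condition we want; this yields the stated equivalence.

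The main obstacle I anticipate is purely bookkeeping: fixing a consistent sign convention for the incidence matrix $\mathcal A$ and then being careful about which arcs contribute to the $d$-sum. The subtlety is that an arc contributes its upper-bound $d$ \emph{only} when $v_{uv}=-1$, and translating ``$v_{uv}=-1$'' back into ``leaves $R_-$ but not into $S_-$'' versus ``into $S_+$ but not from $R_+$'' is where the four emptiness hypotheses do their real work — they are exactly what rules out the cancelling case $w_u=w_v\ne 0$ on an arc and thus what makes the term-by-term identification clean. One should also note that $F^C$ and $F^0$ partition $F$, so splitting $e(S_-\cap F,\,C\setminus T_-)$ into its $F^C$ and $F^0$ parts (with capacities $1$ and $|U^0|$ respectively), and similarly $e(F\setminus S_+,T_+)$, is what separates the unweighted from the $|U^0|$-weighted terms in~\eqref{eq:hall2}. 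Apart from this careful accounting, nothing deep is needed beyond Hoffman's theorem and the total unimodularity of the flow matrix, both of which are already in hand.
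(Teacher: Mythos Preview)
Your proposal is correct and follows exactly the route the paper takes: the paper states the corollary as a direct reformulation of Hoffman's theorem via the vector $\bar w$ in~\eqref{vector:w} and Remark~\ref{coroll:basic}, without spelling out the term-by-term bookkeeping, and you have supplied precisely that bookkeeping. The only extraneous bit is your closing appeal to Theorems~\ref{main} and~\ref{exact}, which concern extendability rather than flow feasibility and are not needed for the corollary as stated.
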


We formulate a basic observation:

\begin{Corollary}\label{remark:sum}
Given a $R_+, R_-, S_+,S_- T_+,T_-$ as in Corollary~\ref{theo:hall:2}, Condition~\eqref{eq:hall2} is dominated by the two conditions
\begin{align*}
&\begin{array}{lll}
0 &\leq & |R_+|  +\displaystyle\sum_{i \in T_+}\left(-\left\lfloor \frac{n}{k_0} \right\rfloor+|C_i| \right) + e(U\setminus R_+,S_+)  \\ 
& & + e(F^{C}\setminus S_+,T_+) +|U^0| \cdot e(F^0\setminus S_+,T_+)
\end{array}\numberthis 
\label{eq:hall+} \\
&\begin{array}{lll}
|R_-|+\displaystyle\sum_{i \in T_-}\left(-\left\lceil \frac{n}{k_0} \right\rceil+|C_i|\right) &\leq &e(R_-,F\setminus S_-) + e(S_-\cap F^{C},C\setminus T_-) \\ 
&& +|U^0| \cdot e(S_-\cap F^0,C\setminus T_-) 
\end{array}\numberthis \label{eq:hall-}
\end{align*} 
where Condition~\eqref{eq:hall+} corresponds to Condition~\eqref{eq:hall2} for $R_+$, $S_+$ and $T_+$ as given above and $R_-=S_- =T_- =\emptyset$ and Condition~\eqref{eq:hall-} corresponds to Condition~\eqref{eq:hall2} for $R_-$, $S_-$ and $T_-$ as given above and $R_+=S_+ =T_+ =\emptyset$. 

\end{Corollary}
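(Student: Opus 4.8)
The plan is to show that the single Hall inequality~\eqref{eq:hall2}, ranging over all admissible sextuples $(R_+,R_-,S_+,S_-,T_+,T_-)$, is equivalent to the conjunction of the two ``one-sided'' families~\eqref{eq:hall+} and~\eqref{eq:hall-}. One direction is immediate: since~\eqref{eq:hall+} is exactly the instance of~\eqref{eq:hall2} with $R_-=S_-=T_-=\emptyset$ and~\eqref{eq:hall-} is exactly the instance with $R_+=S_+=T_+=\emptyset$ (one checks directly that the compatibility constraints $\delta^+(R_+)\cap S_- = \dots = \emptyset$ are trivially satisfied when the ``$-$'' sets, resp. the ``$+$'' sets, are empty, and that the terms in~\eqref{eq:hall2} vanish or reduce as claimed), every instance of~\eqref{eq:hall+} and~\eqref{eq:hall-} is a special case of~\eqref{eq:hall2}; hence~\eqref{eq:hall2} for all sextuples implies both families.

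For the converse — that~\eqref{eq:hall+} and~\eqref{eq:hall-} together imply~\eqref{eq:hall2} — the key observation is that in~\eqref{eq:hall2} the ``$+$'' data $(R_+,S_+,T_+)$ and the ``$-$'' data $(R_-,S_-,T_-)$ decouple. Concretely, I would rewrite~\eqref{eq:hall2} in the form $L_-(R_-,S_-,T_-) \leq L_+(R_+,S_+,T_+)$, where
\[
L_-(R_-,S_-,T_-) := |R_-| + \sum_{i\in T_-}\Bigl(-\bigl\lceil\tfrac{n}{k_0}\bigr\rceil+|C_i|\Bigr) - e(R_-,F\setminus S_-) - e(S_-\cap F^{C},C\setminus T_-) - |U^0|\cdot e(S_-\cap F^0,C\setminus T_-),
\]
collecting exactly the terms of~\eqref{eq:hall2} that depend only on $(R_-,S_-,T_-)$, and $L_+(R_+,S_+,T_+)$ collecting the remaining terms (those depending only on $(R_+,S_+,T_+)$). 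Then~\eqref{eq:hall-} asserts $L_-(R_-,S_-,T_-)\leq 0$ for every admissible $(R_-,S_-,T_-)$, and~\eqref{eq:hall+} asserts $0\leq L_+(R_+,S_+,T_+)$ for every admissible $(R_+,S_+,T_+)$; chaining these two inequalities yields $L_-\leq 0\leq L_+$, which is precisely~\eqref{eq:hall2}. For this chaining to be legitimate one must check that the compatibility constraint on a general sextuple factors as a constraint on $(R_+,S_+,T_+)$ alone together with a constraint on $(R_-,S_-,T_-)$ alone: indeed $\delta^+(R_+)\cap S_- = \delta^+(R_-)\cap S_+ = \delta^+(S_+)\cap T_- = \delta^+(S_-)\cap T_+ = \emptyset$ never constrains a ``$+$'' set against another ``$+$'' set or a ``$-$'' set against another ``$-$'' set, so the admissible sextuples are exactly the pairs consisting of an admissible ``$+$'' triple (with the ``$-$'' sets empty) and an admissible ``$-$'' triple (with the ``$+$'' sets empty).

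The main thing to be careful about — and the only place where a slip is likely — is the bookkeeping of which of the seven right-hand terms of~\eqref{eq:hall2} belong to $L_+$ and which to $L_-$, and confirming that no term mixes ``$+$'' and ``$-$'' data. The cross terms $e(R_-,F\setminus S_-)$ and $e(S_-\cap F^{C},C\setminus T_-)$ and $|U^0|\cdot e(S_-\cap F^0,C\setminus T_-)$ involve only ``$-$'' sets, while $e(U\setminus R_+,S_+)$, $e(F^{C}\setminus S_+,T_+)$ and $|U^0|\cdot e(F^0\setminus S_+,T_+)$ involve only ``$+$'' sets, so the split is clean; this is a direct consequence of Remark~\ref{coroll:basic}, since an arc $ij$ with $w_i=+1,w_j=-1$ would force $+1=-1$ and hence cannot occur, so no arc is counted ``across'' the sign boundary. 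Once this decomposition is verified, the equivalence follows by the elementary monotonicity argument above, with no further computation needed.
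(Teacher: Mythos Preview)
Your proof is correct and matches the (implicit) argument in the paper, which states this result as a ``basic observation'' without proof: inequality~\eqref{eq:hall2} is literally the sum of~\eqref{eq:hall+} and~\eqref{eq:hall-}, so the latter two together dominate it. Your decoupling into $L_-\leq 0$ and $0\leq L_+$ is exactly this summation argument.

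One small wording issue: your claim that ``the admissible sextuples are exactly the pairs consisting of an admissible `$+$' triple and an admissible `$-$' triple'' overstates things --- not every such pair is jointly admissible, since the compatibility constraints $\delta^+(R_+)\cap S_-=\emptyset$ etc.\ are precisely cross-constraints between `$+$' and `$-$' data. But the direction you actually use (given an admissible sextuple, each sign-projection with the opposite-sign sets set to $\emptyset$ is trivially admissible) is correct and is all that is needed.
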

Therefore, w.r.t Condition~\eqref{eq:hall2}, it is sufficient to focus on the two vectors, respectively on the inequalities, induced by
\begin{align*}
\bar w_+:=1_{R_+ \cup S_+ \cup T_+  }  \qquad\text{and}\qquad \bar w_-:=- 1_{R_- \cup S_- \cup T_-  }.
\end{align*}

We call a vector $w$, respectively a collection of signed vertices \textit{dominating} w.r.t. another vector $\tilde w$, if its induced inequality dominates the inequality induced by $\tilde w$. At first, we consider the conditions which are induced by $\bar w_+$.

\subsection{Selecting vertices with a positive contribution}

We consider inequalities which arise from vectors of the form $w_+:=1_{R_+ \cup S_+ \cup T_+  }$. Therefore, we set $R_-=S_-=T_-=\emptyset$ in the remainder of this subsection.

\begin{Lemma}\label{cor:hall+1}
Given vertex sets $R_+, S_+, T_+$ as defined above and let $$\bar R_+:=\{v\in U\mid \delta^+(v)\cap S_+\neq \emptyset\}.$$  Inequality~\eqref{eq:hall+} (induced by $w_+$) is dominated by the inequality induced by 
$1_{\bar R_+ \cup S_+ \cup T_+ }.$
\end{Lemma}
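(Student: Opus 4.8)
The goal is to show that enlarging $R_+$ to $\bar R_+ \supseteq R_+$ (the set of \emph{all} uncolored vertices having a neighbour in $S_+$) produces an inequality that is at least as strong as~\eqref{eq:hall+}. The natural strategy is to compare the two instances of~\eqref{eq:hall+} term by term: the left-hand side is identically $0$ in both, so it suffices to show that the right-hand side does not increase when $R_+$ is replaced by $\bar R_+$ (keeping $S_+,T_+$ fixed, and $R_-=S_-=T_-=\emptyset$). First I would record which terms of~\eqref{eq:hall+} actually depend on $R_+$: only $|R_+|$ and $e(U\setminus R_+, S_+)$ do; the terms $\sum_{i\in T_+}(-\lfloor n/k_0\rfloor + |C_i|)$, $e(F^C\setminus S_+, T_+)$ and $|U^0|\cdot e(F^0\setminus S_+, T_+)$ are untouched. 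So the whole claim reduces to the inequality
\begin{align*}
|\bar R_+| + e(U\setminus \bar R_+, S_+) \;\leq\; |R_+| + e(U\setminus R_+, S_+).
\end{align*}

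\textbf{Key steps.} To verify this, I would split $\bar R_+ \setminus R_+$ off explicitly. Since $\bar R_+ \supseteq R_+$ by construction (every vertex of $R_+$ that has a neighbour in $S_+$ lies in $\bar R_+$; and crucially the admissibility constraint $\delta^+(R_+)\cap S_- = \emptyset$ plays no role here as $S_-=\emptyset$, while there is \emph{no} constraint forcing $R_+ \subseteq \bar R_+$ to fail — indeed any vertex of $R_+$ without a neighbour in $S_+$ contributes nothing to $e(U\setminus R_+,S_+)$ and could equally be dropped, but including it in $\bar R_+$ costs exactly $+1$ on each side, so we may as well assume $R_+\subseteq\bar R_+$), write $\bar R_+ = R_+ \cup X$ with $X := \bar R_+\setminus R_+$. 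Then the left side becomes $|R_+| + |X| + e(U\setminus R_+ \setminus X, S_+)$, and the inequality to prove is
\begin{align*}
|X| + e(U\setminus R_+ \setminus X, S_+) \;\leq\; e(U\setminus R_+, S_+) = e(X, S_+) + e(U\setminus R_+\setminus X, S_+),
\end{align*}
i.e.\ simply $|X| \leq e(X, S_+)$. But every vertex $v\in X \subseteq \bar R_+$ satisfies $\delta^+(v)\cap S_+ \neq \emptyset$ by the definition of $\bar R_+$, hence contributes at least one arc to $e(X,S_+)$; summing over $v\in X$ gives $e(X,S_+)\geq |X|$, which is exactly what is needed. (Here one uses that the arcs from $U$ to $F$ in $A_2$ are simple, so distinct vertices of $X$ give distinct arcs.)

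\textbf{Main obstacle.} There is no serious analytic difficulty; the only thing to be careful about is the bookkeeping of \emph{which} terms in the generalized Hall condition~\eqref{eq:hall2}/\eqref{eq:hall+} depend on $R_+$ and in \emph{which direction}, and confirming that the admissibility conditions $\delta^+(R_+)\cap S_- = \delta^+(S_+)\cap T_- = \emptyset$ (etc.) remain satisfied after replacing $R_+$ by $\bar R_+$ — which is immediate since $S_-=T_-=\emptyset$ in this subsection, so no emptiness constraint involving $R_+$ is active. I would state this explicitly to justify that $1_{\bar R_+\cup S_+\cup T_+}$ is an admissible choice of $\bar w_+$ in the sense of Corollary~\ref{theo:hall:2}, and then the term-by-term comparison above finishes the argument. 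The take-away, worth noting after the proof, is that one may henceforth restrict attention to pairs $(R_+,S_+)$ with $R_+ = \bar R_+$, i.e.\ $R_+$ is determined by $S_+$ as its full in-neighbourhood in $U$.
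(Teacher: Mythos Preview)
Your overall strategy---reduce the comparison to showing $|\bar R_+| + e(U\setminus \bar R_+, S_+) \le |R_+| + e(U\setminus R_+, S_+)$---is exactly what the paper does, and your handling of the set $X=\bar R_+\setminus R_+$ via $e(X,S_+)\ge |X|$ is correct and matches the paper's step~1.

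The gap is in the other direction. Your opening claim ``$\bar R_+\supseteq R_+$ by construction'' is simply false: nothing in the setup prevents $R_+$ from containing a vertex $v$ with $\delta^+(v)\cap S_+=\emptyset$, and such a $v$ lies in $R_+\setminus\bar R_+$. Your parenthetical acknowledges this, and the phrase ``could equally be dropped'' is the right idea, but the follow-up ``including it in $\bar R_+$ costs exactly $+1$ on each side'' does not parse---$\bar R_+$ is \emph{defined} by $S_+$ and is not yours to enlarge, and in any case adding such a $v$ to the $\bar R_+$-side would raise $|\bar R_+|$ by $1$ while leaving $e(U\setminus\bar R_+,S_+)$ unchanged, making the desired inequality harder, not easier. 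So the justification for ``we may as well assume $R_+\subseteq\bar R_+$'' is not actually given.

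The paper fixes this cleanly as a separate step (its step~2): for $v\in R_+$ with $\delta^+(v)\cap S_+=\emptyset$, removing $v$ from $R_+$ decreases $|R_+|$ by $1$ and leaves $e(U\setminus R_+,S_+)$ unchanged, hence yields a dominating inequality. Iterating, one reduces to the case $R_+\subseteq\bar R_+$, after which your $X$-argument finishes. Alternatively, you can do both directions at once by writing $Y:=R_+\setminus\bar R_+$, observing $e(U\setminus\bar R_+,S_+)=0$ by the very definition of $\bar R_+$, and computing
\[
|\bar R_+| + 0 \;=\; |R_+| - |Y| + |X| \;\le\; |R_+| + |X| \;\le\; |R_+| + e(X,S_+) \;=\; |R_+| + e(U\setminus R_+,S_+).
\]
Either route closes the gap; just make the removal case explicit rather than burying it in a garbled parenthetical.
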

\begin{proof}
We divide the proof into two steps:
\begin{itemize}
	\item[1.] Assume that there is a vertex $v\in U$ with $v\notin R_+$ but $v\in \delta^-(S_+)$. Obviously, it is $|R_+|=|R_+\cup\{v\}|-1$ and
\begin{align*}
e(U\setminus (R_+\cup\{v\}),S_+)\leq e(U\setminus R_+,S_+)-1.
\end{align*}
Since all other terms are unchanged, $1_{(R_+\cup\{v\}) \cup S_+ \cup T_+  }$ dominates $1_{R_+ \cup S_+ \cup T_+  }$.

\item[2.]
Assume that there is a vertex $v\in U$ with $v\in R_+$ but $\delta^+(v)\cap S_+= \emptyset$. Since $|R_+|=|R_+\setminus\{v\}|+1$ and  
\begin{align*}
e(U\setminus R_+,S_+)= e(U\setminus (R_+\setminus\{v\}),S_+),
\end{align*}
the inequality $1_{(R_+\setminus\{v\}) \cup S_+ \cup T_+  }$ dominates $1_{R_+ \cup S_+ \cup T_+  }$ because all other terms remain unchanged.
\end{itemize}
\end{proof}
Restating the lemma, $R_+$ contains uncolored vertices which \textit{can} still be colored by a color in $S_+$. It implies that w.l.o.g., $R_+$ can be chosen maximally in that sense. In the light of this result, we assume w.l.o.g. that $R_+=\bar{R}_+$ in the following. Therefore, Inequality \eqref{eq:hall+} collapses to
\begin{align}\label{eq:hall+2}
\sum_{i \in T_+}\left(\left\lfloor \frac{n}{k_0} \right\rfloor-|C_i|\right)  \leq  |R_+|  + e(F^{C}\setminus S_+,T_+) +|U^0| \cdot e(F^0\setminus S_+,T_+) 
\end{align} 
In the case $T_+=\emptyset$, we obtain the inequality $0\leq|R_+|$ which is trivially fulfilled. Therefore, we assume that $T_+\neq \emptyset$ for the remainder of this section. The next result further classifies the dominating inequalities of the form \eqref{eq:hall+2}.

\begin{Lemma}\label{cor:hall+2}
Let $T_+\neq \emptyset$ be given. Assume that for any $S_+$, $R_+$ is chosen accordingly to Lemma~\ref{cor:hall+1}. A choice for $S_+$ such that 
\begin{align*}
1.)\quad S_+\subseteq\delta^{-}(T_+)\qquad\textnormal{and}\qquad 2.)\quad S_+\cap F^{0}\supseteq\delta^{-}(T_+)\cap F^{0} 
\end{align*}
holds dominates any other choice of $S_+.$
\end{Lemma}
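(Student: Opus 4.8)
The plan is to show that modifying $S_+$ in two ways -- removing nodes of $S_+$ that are not in $\delta^-(T_+)$, and adding to $S_+$ all nodes of $F^0\cap\delta^-(T_+)$ -- can only strengthen (or preserve) inequality~\eqref{eq:hall+2}, so that the extremal choice described in the statement dominates every other choice. Throughout, $R_+$ is always recomputed as $\bar R_+=\{v\in U\mid\delta^+(v)\cap S_+\neq\emptyset\}$ per Lemma~\ref{cor:hall+1}, so changing $S_+$ also changes $R_+$, and this interaction is what I would track carefully.

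First I would treat claim~1. Suppose $v\in S_+$ with $v\notin\delta^-(T_+)$, i.e.\ $\delta^+(v)\cap T_+=\emptyset$; set $S_+':=S_+\setminus\{v\}$ and let $R_+'=\bar R_+$ be the corresponding uncolored set. The left-hand side $\sum_{i\in T_+}(\lfloor n/k_0\rfloor-|C_i|)$ is unchanged. On the right-hand side, $e(F^C\setminus S_+,T_+)$ and $|U^0|\cdot e(F^0\setminus S_+,T_+)$ do not decrease when we enlarge the complement by putting $v$ back in, precisely because $v$ has no arcs into $T_+$ (so $e(\{v\},T_+)=0$ and adding $v$ to the complement adds nothing but also removes nothing). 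The only subtlety is $|R_+|$: shrinking $S_+$ can only shrink $\bar R_+$, so $|R_+'|\leq|R_+|$, which moves the wrong way. Here I would argue that the vertices lost from $\bar R_+$ are exactly those $u\in U$ whose only neighbours in $S_+$ lay in $\{v\}$; since $v$ has no neighbour in $T_+$, such $u$ contribute $e(\{u\},T_+)=0$ and hence do not appear in any of the $e(\cdot,T_+)$ terms either -- so removing them from $R_+$ and simultaneously removing $v$ from $S_+$ leaves the right-hand side non-increased overall only if we are careful. Actually the cleaner route: note that for $v\notin\delta^-(T_+)$, the inequality~\eqref{eq:hall+2} for $(S_+,R_+)$ and for $(S_+\setminus\{v\},\bar R_+(S_+\setminus\{v\}))$ differ only through $|R_+|$ and through the two $e(\cdot\setminus S_+,T_+)$ terms; but those two $e$-terms are literally unaffected by whether $v\in S_+$ (as $v\notin\delta^-(T_+)$), and $|R_+|$ only shrinks, so the smaller choice has the smaller RHS and therefore dominates. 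This is the step I expect to be the main obstacle, because one must verify that no vertex counted in $R_+$ is simultaneously needed to bound one of the $e$-terms.

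Next I would treat claim~2. Suppose $v\in\delta^-(T_+)\cap F^0$ but $v\notin S_+$; set $S_+':=S_+\cup\{v\}$ and $R_+'=\bar R_+(S_+')\supseteq R_+$. Again the LHS is unchanged. Adding $v$ to $S_+$ removes $v$ from the complement $F^0\setminus S_+$, decreasing $|U^0|\cdot e(F^0\setminus S_+,T_+)$ by $|U^0|\cdot e(\{v\},T_+)\geq|U^0|\geq 1$. It may increase $|R_+|$ by at most the number of uncolored vertices $u$ adjacent to $v$ but to no other node of $S_+$; however every such $u$ lies in the clique-side of the network and has exactly one outgoing arc into $F^0$ (a copy arc), so each new $u$ contributes at most $1$ to $|R_+|$, and the count of such $u$ is at most $e(U,\{v\})=|U^0|$ by the capacity structure of $A_2$ incident to $F^0$. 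Hence the gain in $|R_+|$ is dominated by the loss $|U^0|\cdot e(\{v\},T_+)$ in the last term, so the RHS does not increase; the new choice dominates. Combining the two claims by iterating -- first discard all $S_+$-nodes outside $\delta^-(T_+)$, then add all of $\delta^-(T_+)\cap F^0$ -- gives a choice $S_+$ satisfying both 1.) and 2.) whose induced inequality dominates the original, which is exactly the assertion. I would close by noting that since $R_+$ is pinned down by Lemma~\ref{cor:hall+1}, the resulting dominating inequality depends only on $T_+$ and on $S_+\cap F^C$, which is the reduction the next subsection exploits.
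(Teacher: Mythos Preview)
Your approach is the same two--step argument as the paper's proof: first delete any $v\in S_+\setminus\delta^-(T_+)$, then add any $v\in(\delta^-(T_+)\cap F^0)\setminus S_+$, each time tracking only $|R_+|$ and the relevant $e(\cdot,T_+)$ term. The final reasoning you give in each step is correct and matches the paper.

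Two points to clean up. In step~1 you write that $|R_+'|\le|R_+|$ ``moves the wrong way'' and then launch into a detour about lost vertices before correcting yourself. There is no detour needed: a smaller right-hand side is exactly what dominating means here, so the decrease of $|R_+|$ is the entire argument (as you eventually state). In step~2 your intermediate structural claims are off: the $u$'s newly entering $R_+$ when you add $v\in F^0$ lie in $U^0$, not on the ``clique-side'', and such $u$ typically have many outgoing $A_2$-arcs, not one. The correct (and sufficient) bound is simply that at most $|U^0|$ vertices of $U^0$ can be adjacent to $v$, so $|R_+|$ grows by at most $|U^0|$, while $|U^0|\cdot e(F^0\setminus S_+,T_+)$ drops by exactly $|U^0|$ because $v$ has a single outgoing arc in $A_3$. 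With these fixes your write-up coincides with the paper's proof.
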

\begin{proof}
We divide the proof into two steps:
\begin{itemize}
\item[1.] Suppose that there is a vertex $v\in S_+$ with $v\notin \delta^-(T_+)$. We remove $v$ from $S_+.$ After possibly changing the set $R_+$ (accordingly to Lemma~\ref{cor:hall+1}), we obtain that $1_{R_+\cup (S_+\setminus\{v\}) \cup T_+  } $ dominates $1_{R_+ \cup S_+ \cup T_+  }$, because all values remain unchanged except $|R_+|$ which possibly decreases. 
\item[2.] 
Assume that there is $v\in \delta^{-}(T_+)\cap F^{0}$ with  $v\notin S_+$. We add $v$ to $S_+$ and adapt $R_+$ accordingly. Therefore, $1_{R_+\cup (S_+\cup\{v\}) \cup T_+  }$ dominates $1_{R_+ \cup S_+ \cup T_+  }$, because $|R_+|$ increases at most by $|U^0|$ whereas $e(F^0\setminus S_+,T_+)$ decreases by $|U^0|$ since $v$ has exactly one outgoing arc.
\end{itemize}
\end{proof}
The lemma implies that w.l.o.g., $S_+$ contains \textit{at most} the vertices connected to $T_+$. Furthermore, concerning the vertices in $F^0$, it contains \textit{exactly} all neighbors of $T_+$. Hence, w.l.o.g., $e(F^0\setminus S_+,T_+)=0$ and we obtain:
\begin{Conclusion}\label{fam:1}
With respect to Corollary~\ref{theo:hall:2} and Remark~\ref{remark:sum}, it suffices to consider  $T_+\neq \emptyset$ and $S_+$ such that $S_+\subseteq \delta^-(T_+)$ and $S_+\cap F^0=\delta^{-}(T_+)\cap F^{0}$. Hence, the condition boils down to:
\begin{align}
\sum_{i \in T_+}\left(\left\lfloor \frac{n}{k_0} \right\rfloor-|C_i|\right)  \leq  |R_+|  + e(F^{C}\setminus S_+,T_+),  \label{eq:hall+3}
\end{align} 
where $R_+$ is chosen depending on $S_+$ as described in Lemma~\ref{cor:hall+1}.
\end{Conclusion}
We point out that an inequality induced by $T_+:=T_+^1 \cup T_+^2$ is, in general, not dominated by the corresponding inequalities induced by $T_+^1$ and $T_+^2$ since $|R_+|$ w.r.t. $T_+$ is, in general, smaller than $|R_+^1|+|R_+^1|$ (w.r.t. $T_+^1$ and $T_+^2$).

Concerning an application as pruning rule, it is clear that any choice of $T_+$ and $S_+$ yields a condition which has to be fulfilled for a coloring to be extendable. 

For a practical point of view, consider that $T_+$ corresponds to a selection of colors and that $S_+=\delta^-(T_+)$. In this case, there are no edges going from $F^C\setminus S_+$ to $T_+$ and $R_+$ contains all vertices in $U$ which \textit{can} be colored by colors associated to $T_+$, i.e., with a slight abuse of notation, the condition writes:
\begin{align}
\sum_{i \in T_+}\left(\left\lfloor \frac{n}{k_0} \right\rfloor-|C_i|\right)  \leq  \left|\{v\in U\mid F(v)\cap T_+\neq \emptyset\}\right|.\label{com:cond+}
\end{align}  
Or, from a combinatorial point of view: There have to be enough vertices which can be colored by colors of $T_+$ such that the corresponding color classes can be filled up to their cardinality requirement. We remark that this inequality can (theoretically) be strengthened. 
Clearly, such strengthening is difficult to realize in practice, since it relies on subset based arguments. 

However, with respect to our computational experiments, we discuss this strengthening for the case $|T_+|=1$. Hence, assume that $T_+=\{f\}$ for 'some color $f$'. Then, Condition~\eqref{com:cond+} reads   
\begin{align}
\left\lfloor \frac{n}{k_0} \right\rfloor-|C_f|  \leq  
&\left|\{v\in U\mid f\in F(v)\}\right| \notag\\
&=\sum_{j=1}^{l}\left|\{v\in U^j\mid f\in F(v)\}\right|+\left|\{v\in U^0\mid f\in F(v)\}\right|.
\end{align}  
Note that, in the current setting $S_+\cap F^j$ contains a single element. The decision whether the inequality can be strengthened by excluding this element from $S_+$ 
can be encoded in the condition. We obtain:
\begin{align}
\left\lfloor \frac{n}{k_0} \right\rfloor-|C_f|  \leq  &\sum_{j=1}^{l}\min\left\{1,\left|\{v\in U^j\mid f\in F(v)\}\right|\right\} \notag\\
+&\left|\{v\in U^0\mid f\in F(v)\}\right|. \label{f1}
\end{align}
That is, the selected color classes have to be filled up with at most one element per clique and all vertices in $U^0$ which can use these colors. 

\subsection{Selecting vertices with a negative contribution}

We consider inequalities which arise from vectors of the form $w_-:=-1_{R_- \cup S_- \cup T_-  }$. Therefore, we set $R_+=S_+=T_+=\emptyset$ in the remainder of this subsection. As in the previous subsection, we show that $R_-$ can be chosen accordingly to $S_-$.

\begin{Lemma}\label{cor:hall-1}
Given vertex sets $R_-, S_-, T_-$ as defined above and let $$\bar R_-:=\{v\in U\mid \delta^+(v)\subseteq S_-\}.$$ Inequality \eqref{eq:hall-} induced by $w_-$ is dominated by the one induced by 
$1_{\bar R_- \cup S_- \cup T_- }.$  
\end{Lemma}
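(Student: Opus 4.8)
The plan is to mirror the proof of Lemma~\ref{cor:hall+1} almost verbatim, swapping the roles that the orientation of the arcs plays. Recall that in Inequality~\eqref{eq:hall-} the term controlled by $R_-$ is $e(R_-,F\setminus S_-)$, i.e.\ the number of arcs leaving $R_-$ that do \emph{not} land in $S_-$; adding a vertex $v$ to $R_-$ increases the left-hand side by $1$ (through $|R_-|$) and increases the right-hand side by exactly $|\delta^+(v)\setminus S_-|$. Hence a vertex is \emph{harmless to add} precisely when $\delta^+(v)\subseteq S_-$ (so the right side grows by at least $1$) and is \emph{harmful to keep} when $\delta^+(v)\not\subseteq S_-$ (so keeping it only helps the right side). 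This is exactly the definition of $\bar R_-$, so the two-step monotonicity argument goes through.

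Concretely, I would split the proof into the same two bullet steps. \textbf{Step 1:} Suppose $v\in U$ satisfies $\delta^+(v)\subseteq S_-$ but $v\notin R_-$. Replacing $R_-$ by $R_-\cup\{v\}$, we have $|R_-\cup\{v\}| = |R_-|+1$, while $e(R_-\cup\{v\},F\setminus S_-) = e(R_-,F\setminus S_-) + |\delta^+(v)\setminus S_-| = e(R_-,F\setminus S_-)$ since $\delta^+(v)\subseteq S_-$. (Here I use that every vertex of $U$ has all its outgoing arcs into $F$, per the network construction, so $\delta^+(v)\subseteq S_-\subseteq F$ makes sense.) Every other term of~\eqref{eq:hall-} is unchanged because it depends only on $S_-, T_-$, so the left-hand side of~\eqref{eq:hall-} increases by $1$ and the right-hand side is unchanged: the inequality induced by $1_{(R_-\cup\{v\})\cup S_-\cup T_-}$ dominates that induced by $1_{R_-\cup S_-\cup T_-}$. \textbf{Step 2:} Suppose $v\in R_-$ but $\delta^+(v)\not\subseteq S_-$, i.e.\ $|\delta^+(v)\setminus S_-|\geq 1$. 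Removing $v$, $|R_-\setminus\{v\}| = |R_-|-1$ and $e(R_-\setminus\{v\},F\setminus S_-) = e(R_-,F\setminus S_-) - |\delta^+(v)\setminus S_-| \leq e(R_-,F\setminus S_-)-1$; again all other terms are fixed, so the new inequality dominates the old. Iterating Steps~1 and~2 drives $R_-$ to exactly $\bar R_- = \{v\in U\mid \delta^+(v)\subseteq S_-\}$, which proves the claim.

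The only subtle point — and the thing I would state carefully rather than the computation itself — is that adding or deleting vertices of $U$ from $R_-$ genuinely leaves \emph{all} the other summands of~\eqref{eq:hall-} untouched: the terms $\sum_{i\in T_-}(-\lceil n/k_0\rceil+|C_i|)$, $e(S_-\cap F^C, C\setminus T_-)$, and $|U^0|\cdot e(S_-\cap F^0, C\setminus T_-)$ are functions of $S_-$ and $T_-$ only, with no dependence on $R_-$, precisely because in the network of Figure~\ref{fig:modflow} the vertices of $U$ have no incoming arcs from within $U\cup F\cup C$ (their only in-arc is from the suppressed source $s$) and $R_-\subseteq U$. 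One must also check the side constraint $\delta^+(R_-)\cap S_+=\emptyset$ from Corollary~\ref{theo:hall:2} is preserved, but in this subsection $S_+=\emptyset$, so it is vacuous. With these observations in place the argument is a routine monotonicity sweep, exactly parallel to Lemma~\ref{cor:hall+1}; I expect no real obstacle beyond bookkeeping the sign conventions of~\eqref{eq:hall-} correctly.
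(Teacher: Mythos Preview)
Your proposal is correct and follows essentially the same two-step argument as the paper's proof: first add to $R_-$ any $v\in U$ with $\delta^+(v)\subseteq S_-$ (left side up by $1$, right side unchanged), then remove from $R_-$ any $v$ with $\delta^+(v)\not\subseteq S_-$ (left side down by $1$, right side down by at least $1$). One small slip to fix in your write-up: the parenthetical ``so the right side grows by at least $1$'' in your opening paragraph is misplaced --- when $\delta^+(v)\subseteq S_-$ the right side grows by $0$, which is precisely why adding $v$ \emph{strengthens} the inequality --- but your formal Steps~1 and~2 handle this correctly.
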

\begin{proof}
As above, we divide the proof into two parts:
\begin{itemize}	
\item[1.] If there is $v\in U$ with $v\notin R_-$ and $\delta^+(v)\subseteq S_-$, the inequality induced by $- 1_{(R_- \cup\{v\})\cup S_- \cup T_-  }$ dominates the inequality corresponding to $- 1_{R_- \cup S_- \cup T_-  }$ because $|R_- \cup\{v\}|>|R_-|$ and $e(R_-\cup\{v\},V_1\setminus S_-)=e(R_-,V_1\setminus S_-)$ hold. The remaining terms stay unchanged.

\item[2.]Similarly, assume that there is $v\in U$ with $v\in R_-$ and there is $u\in\delta^+(v)$ with $u\notin S_-$. Then, the inequality induced by $- 1_{(R_- \setminus\{v\})\cup S_- \cup T_-  }$ dominates the one corresponding to $- 1_{R_- \cup S_- \cup T_-  }$ because $|R_- \setminus\{v\}|=|R_-|-1$ and $e(R_-\setminus\{v\},V_1\setminus S_-)\leq e(R_-,V_1\setminus S_-)-1.$ Again, the remaining  terms stay unchanged.
\end{itemize}
\end{proof}
We rephrase the result. $\bar R_-$ contains \textit{all} vertices in $U$, for which \textit{all} outgoing arcs are in $S_-$. This is the set of nodes which \textit{have} to be colored by a color in $S_+$. Lemma~\ref{cor:hall-1} implies that w.l.o.g., $R_-$ can be chosen maximally in that sense. Hence, we obtain $e(R_-,V_1\setminus S_-)=0$ and the inequality collapses to
\begin{align*}
|R_-|+\sum_{i \in T_-}\left(-\left\lceil \frac{n}{k_0} \right\rceil+|C_i|\right)  \leq &  e(S_-\cap F^{C},C\setminus T_-) \\ 
& +|U^0| \cdot e(S_-\cap F^0,C\setminus T_-) \numberthis \label{eq:hall-2}
\end{align*} 

In the next paragraphs we discuss the two cases $T_-=\emptyset$ and $T_-\neq \emptyset$ with the choice of $R_-$ according to Lemma~\ref{cor:hall-1}.

\paragraph{The first case.}

We start with $T_-=\emptyset$, for which Inequality~\eqref{eq:hall-2} reduces to 
\begin{align}
|R_-|& \;\leq\;e(S_-\cap F^{C},C) + |U^0| \cdot e(S_-\cap F^0,C). \label{eq:hall-3}
\end{align} 

This inequality is dominated by inequalities for each set $U^j$ :
\begin{Corollary}
Inequality \eqref{eq:hall-3} induced by $- 1_{R_- \cup S_- \cup \emptyset }$ is dominated by the (atomic) inequalities induced by
\begin{align*}
- 1_{R_-\cap U^j \cup S_-\cap F^j \cup \emptyset  }    \qquad\forall j=1,\ldots,l.     
\end{align*}
\end{Corollary}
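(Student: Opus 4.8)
The statement says that Inequality~\eqref{eq:hall-3}, induced by the vector $-1_{R_-\cup S_-\cup\emptyset}$ (with $T_-=\emptyset$ and $R_-$ chosen maximally according to Lemma~\ref{cor:hall-1}), is dominated by the ``atomic'' inequalities obtained by restricting both $R_-$ and $S_-$ to a single clique block $U^j$, $F^j$. The natural approach is to exploit that the left-hand side $|R_-|$, the right-hand side terms $e(S_-\cap F^C,C)$ and $e(S_-\cap F^0,C)$, and the block structure of the network all \emph{split additively} over the index set $\{1,\ldots,l\}$ together with the block $0$. So I would first decompose $R_-=\bigcup_{j}(R_-\cap U^j)\,\cup\,(R_-\cap U^0)$ and $S_-=\bigcup_{j}(S_-\cap F^j)\,\cup\,(S_-\cap F^0)$, observing these are disjoint unions, so $|R_-|=\sum_j |R_-\cap U^j| + |R_-\cap U^0|$.

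\textbf{Key steps.} Second, I would check the crucial consistency point: when $R_-$ is chosen maximally as $\bar R_-=\{v\in U\mid \delta^+(v)\subseteq S_-\}$, the restriction $R_-\cap U^j$ is exactly the maximal choice for the sub-instance with $S_-\cap F^j$, because in the network of Figure~\ref{fig:modflow} every uncolored vertex $v\in U^j$ has all its outgoing arcs landing inside $F^j$ (an uncolored vertex in clique $j$ only connects to its own copy $F^j$ of the colors). Hence $\delta^+(v)\subseteq S_-$ is equivalent to $\delta^+(v)\subseteq S_-\cap F^j$, which is precisely the maximality condition for the atomic instance. Third, I would note that the right-hand side also splits: $e(S_-\cap F^C,C)=\sum_{j=1}^l e(S_-\cap F^j,C)$ since $F^C=\bigcup_j F^j$ is a disjoint union and arcs from $F^j$ go only to $C$; and $e(S_-\cap F^0,C)$ is the block-$0$ contribution. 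For the block-$0$ vertices, either they can be absorbed into a trivial atomic inequality or, since singleton cliques were pushed into $U^0$ and $R_-\cap U^0$ contributes $|R_-\cap U^0|$ on the left while $|U^0|\cdot e(S_-\cap F^0,C)$ sits on the right, one checks this atomic piece is also dominated (indeed $|R_-\cap U^0|\le |U^0|$ and each such vertex forces at least one arc from $F^0$ to $C$ to be selected in $S_-$, giving the required inequality). Summing the $l$ atomic inequalities for the cliques together with the block-$0$ inequality then reproduces~\eqref{eq:hall-3} exactly, which is what ``dominated by'' means here.

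\textbf{Main obstacle.} The one place requiring care is verifying that the maximal-$R_-$ choice is compatible with the decomposition --- i.e., that taking $\bar R_-$ globally and then intersecting with $U^j$ yields the same set as taking the maximal $\bar R_-^{\,j}$ for the atomic instance with selection $S_-\cap F^j$. This hinges entirely on the network topology (each $v\in U^j$ has $\delta^+(v)\subseteq F^j$), which is immediate from the construction of $A_2$ in Section~\ref{sec:haupt}, where arcs from $U^j$ go only to $F^j$. Once this block-separation of the out-neighborhoods is stated, the rest is a routine additive bookkeeping argument, and the handling of the $U^0$ block is the only other subtlety (it contributes an extra atomic inequality, or can be folded in, without weakening the conclusion). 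I would close by remarking that this reduction means, for the negative-contribution conditions with $T_-=\emptyset$, it suffices algorithmically to test one inequality per clique.
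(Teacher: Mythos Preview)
Your approach is correct and essentially the same as the paper's: both arguments sum the atomic inequalities over $j=1,\ldots,l$ to recover $\sum_{j}|R_-\cap U^j|\leq e(S_-\cap F^{C},C)$, and then dispose of the block-$0$ contribution by observing that $|R_-\cap U^0|\leq |U^0|$ while any vertex of $R_-\cap U^0$ forces $S_-\cap F^0\neq\emptyset$, so $|U^0|\cdot e(S_-\cap F^0,C)\geq |U^0|$. You are more explicit than the paper about why the maximal choice $\bar R_-$ is compatible with the block decomposition (namely because $\delta^+(v)\subseteq F^j$ for $v\in U^j$), a point the paper leaves implicit; otherwise the two proofs coincide.
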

\begin{proof}
Observe that the sum of the atomic inequalities is exactly 
\begin{align} \label{eq:hall-4}
|R_-|& \leq e(S_-\cap F^{C},C).
\end{align} 
If $S_-\cap F^0\neq \emptyset,$ the contribution to $|R_-|$ on the left side was at most $|U^0|$. However, the contribution of $|U^0| \cdot e(S_-\cap F^0,C)$ on the right side is at least $|U_0|$. Hence, Condition~\eqref{eq:hall-3} is implied by Condition~\eqref{eq:hall-4}.
\end{proof}

Therefore, the dominating conditions solely depend on the sets $F^j$. Note that from any element in $F^j$ exactly one arc goes to $C$. All in all, for $T_-=\emptyset$, the strongest conditions are given by: 

\begin{Conclusion}\label{fam:2}
With respect to Corollary~\ref{theo:hall:2} and Remark~\ref{remark:sum}, for $T_-=\emptyset$, it suffices to consider $S_-\subseteq F^j$ for all $j=1,\ldots,l$, that is, the conditions
\begin{align}
|R_-|& \leq |S_-|\label{eq:hall--}
\end{align} 
where $R_-$ is chosen depending on $S_-$ as described in Lemma~\ref{cor:hall-1}.
\end{Conclusion}


We remark that Condition~\eqref{eq:hall--} corresponds to the well known Hall Conditions for matchings in bipartite graphs: $S_-$ corresponds to selection of colors, and $R_-$ contains exactly these nodes which have to be colored by these colors. Clearly there have to be more colors than vertices to permit a coloring. 

%
%
%

\paragraph{The second case}

We consider the case $T_-\neq\emptyset$ and discuss Inequality~\eqref{eq:hall-2}. Analogously to the 'positive' case, we state a first Corollary:

\begin{Lemma}\label{cor:hall-2}
Let $T_-\neq \emptyset$ be given. Assume that for any $S_-$, $R_-$ is chosen according to Lemma~\ref{cor:hall-1}. A choice for $S_-$ such that 
\begin{align*}
1.)\quad S_-\supseteq\delta^{-}(T_-)\qquad \textnormal{and}\qquad 2.)\quad S_-\cap F^{0}\subseteq\delta^{-}(T_-)\cap F^{0}
\end{align*}
holds dominates any other choice of $S_-$.
\end{Lemma}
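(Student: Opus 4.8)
The plan is to mimic exactly the two-step exchange argument used in the proof of Lemma~\ref{cor:hall+1} and its positive-case analogue Lemma~\ref{cor:hall+2}, but now tracking how the right-hand side of Inequality~\eqref{eq:hall-2} changes. Throughout, $R_+=S_+=T_+=\emptyset$ and $R_-$ is always taken maximally as in Lemma~\ref{cor:hall-1}, i.e. $R_-=\bar R_-=\{v\in U\mid \delta^+(v)\subseteq S_-\}$, so that as $S_-$ grows $|R_-|$ can only grow, and as $S_-$ shrinks $|R_-|$ can only shrink. The quantity to control is the right-hand side $e(S_-\cap F^C,C\setminus T_-)+|U^0|\cdot e(S_-\cap F^0,C\setminus T_-)$ together with $|R_-|$ on the left.

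First I would prove part~1.): suppose $v\in\delta^-(T_-)$ but $v\notin S_-$. Add $v$ to $S_-$ and re-choose $R_-$ maximally. Since every element of $F$ (both in $F^C$ and in $F^0$) has exactly one outgoing arc, and that single arc of $v$ goes into $T_-$ (not into $C\setminus T_-$), adding $v$ to $S_-$ does not change $e(S_-\cap F^C,C\setminus T_-)$ nor $e(S_-\cap F^0,C\setminus T_-)$ at all — the new arc contributed is counted against $T_-$, which is excluded from the sum. On the left-hand side, $|R_-|$ can only increase (possibly by as much as $|U^0|$ if $v\in F^0$, or by $1$ if $v\in F^C$, or not at all). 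Hence the inequality induced by $-1_{R_-\cup(S_-\cup\{v\})\cup T_-}$ dominates the one induced by $-1_{R_-\cup S_-\cup T_-}$. This handles the ``$\supseteq\delta^-(T_-)$'' direction.

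Next I would prove part~2.): suppose $v\in S_-\cap F^0$ but $v\notin\delta^-(T_-)$, i.e. the unique outgoing arc of $v$ lands in $C\setminus T_-$. Remove $v$ from $S_-$ and re-choose $R_-$ maximally; $|R_-|$ can only decrease (by at most $|U^0|$). On the right-hand side, removing $v$ decreases $e(S_-\cap F^0,C\setminus T_-)$ by exactly $1$, hence decreases the term $|U^0|\cdot e(S_-\cap F^0,C\setminus T_-)$ by exactly $|U^0|$. So the net change on the left is a decrease of at most $|U^0|$ and the net change on the right is a decrease of exactly $|U^0|$; therefore the inequality becomes (weakly) stronger, i.e. $-1_{R_-\cup(S_-\setminus\{v\})\cup T_-}$ dominates $-1_{R_-\cup S_-\cup T_-}$. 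Combining the two steps: iterating step~1 until $S_-\supseteq\delta^-(T_-)$ and step~2 until $S_-\cap F^0\subseteq\delta^-(T_-)\cap F^0$ produces a choice satisfying both stated properties that dominates the original, which is the claim.

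The main subtlety — and the only place one must be a little careful — is that steps~1 and~2 interact through the re-optimization of $R_-$, and one should argue they can be interleaved without looping: note that step~1 only ever enlarges $S_-$ by vertices in $\delta^-(T_-)$, and step~2 only ever removes vertices of $F^0$ lying outside $\delta^-(T_-)$; these two operations touch disjoint parts of $S_-$, so performing all of step~1 first and all of step~2 afterward terminates and yields the desired $S_-$. A second point worth stating explicitly is the ``exactly one outgoing arc'' fact for nodes in $F$ (true by construction of $A_3$), since the whole bookkeeping of how $e(\,\cdot\,,C\setminus T_-)$ changes hinges on it; this is the analogue of the remark made in the proof of Lemma~\ref{cor:hall+2}.
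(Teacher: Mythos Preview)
Your proof is correct and mirrors the paper's own two-step exchange argument exactly: add any $v\in\delta^-(T_-)\setminus S_-$ (right-hand side unchanged, $|R_-|$ can only grow), then remove any $v\in (S_-\cap F^0)\setminus\delta^-(T_-)$ (right-hand side drops by exactly $|U^0|$, left-hand side by at most $|U^0|$). One small aside: the parenthetical bound ``by $1$ if $v\in F^C$'' is not sharp---several vertices of the same clique $U^j$ could simultaneously enter $R_-$ when a single $f^j(i)$ is added---but since the argument only uses that $|R_-|$ does not decrease, this is immaterial.
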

\begin{proof}
As before, we divide the proof into two steps:
\begin{itemize}
\item[1.] Suppose that there is a vertex $v\in \delta^{-}(T_-)$ with $v\notin S_-$. We add $v$ to $S_-$. After possibly changing the set $R_-$ (accordingly to Lemma \ref{cor:hall-1}), $-1_{R_-\cup (S_-\cup\{v\}) \cup T_-  }$ dominates $-1_{R_- \cup S_- \cup T_-  }$, because all values remain unchanged except $|R_-|$ which possibly increases. 

\item[2.] 
Now, assume that there is $v\in S_-\cap F^0$ with  $v\notin \delta^{-}(T_-)\cap F^{0}$. We remove $v$ from $S_-$ and adapt $R_-$ accordingly. Therefore, $-1_{R_-\cup (S_-\setminus\{v\}) \cup T_-  }$ dominates $-1_{R_- \cup S_- \cup T_-  }$, because $|R_-|$ (left hand side) decreases at most by $|U^0|$ whereas the right side ($|U^0| \cdot e(S_-\cap F^0,C\setminus T_-)$) also decreases by $|U^0|$ since $v$ has exactly one outgoing arc to $C\setminus T_-$. 
\end{itemize}
\end{proof}
The lemma implies that w.l.o.g., $S_-$ contains \textit{at least} the vertices connected to $T_-$, and w.r.t. $U^0$, contains exactly the vertices connected to $T_-$. Hence, w.l.o.g., $e(S_-\cap F^0,C\setminus T_-)=0$. We obtain:
\begin{Conclusion}\label{fam:3}
With respect to Corollary~\ref{theo:hall:2} and Remark~\ref{remark:sum}, it suffices to consider  $T_-\neq \emptyset$ and $S_-$ such that $S_-\supseteq \delta^-(T_-)$ and $S_-\cap F^0=\delta^{-}(T_-)\cap F^{0}$. Hence, the condition boils down to
\begin{align}
|R_-|+\sum_{i \in T_-}\left(-\left\lceil \frac{n}{k_0} \right\rceil+|C_i|\right)  \leq \; e(S_-\cap F^{C},C\setminus T_-)
\end{align} 
where $R_-$ is chosen depending on $S_-$ as described in Lemma~\ref{cor:hall-1}.
\end{Conclusion}
We point out that an inequality induced by $T_-:=T_-^1 \cup T_-^2$ is, in general, not dominated by the corresponding inequalities induced by $T_-^1$ and $T_-^2$. 
%
Again, we cannot state any further, general dominance relations. 
Still, ``combinatorial'' rules can be obtained straightforward, i.e., 
For a practical application, consider that $T_-$ corresponds to a selection of colors and that $S_-=\delta^-(T_-)$. In this case, there are no edges going from $S_-\cap F^C$ to $C\setminus T_-$ and $R_-$ contains all vertices in $U$ which \textit{have} to be colored by colors associated to $T_-$, i.e., with a slight abuse of notation, the condition writes:
\begin{align}
\left|\left\{v\in U\mid F(v)\subseteq T_-\right\}\right|\leq \sum_{i \in T_-}\left(\left\lceil \frac{n}{k_0} \right\rceil-|C_i|\right) . \label{f3}
\end{align}  
Or, from a combinatorial point of view: The number of vertices which have to be colored by colors in $T_-$ may not exceed the maximal cardinality of the color classes associated to $T_-$. In contrast to Condition~\eqref{f1}, no further strengthening via minimization subproblems is necessary due to the presence of the Hall Conditions~\eqref{eq:hall--}.

\section{Computational Study}\label{sec:compu}

\subsection{Setting} \label{diapp}

In this section, we provide a computational evaluation of our results. As a basic solution algorithm, we consider EQDSATUR as presented in~\cite{MNS:2014b} (henceforth: STD). Note that this includes a check of the pruning rule~\eqref{easyprun} as described in Theorem~\ref{firstprun} at step~\eqref{choosei} in EQDSATUR. We compare the key figures/statistics of this algorithm to the same algorithm extended with the pruning rules described in this work. 
\bigskip\\
In particular, we consider the algorithm FLOW. FLOW extends on EQDSATUR by, in step~\eqref{choosei}, after checking the rule~\eqref{easyprun}, executing the Pruning Algorithm~\ref{alg:prune}. Note that this takes place in every node of the search tree and that in step~\eqref{chooseDecomp}, the pruning algorithm calls Algorithm~\ref{alg:nonadjcliques} to determine a decomposition of the uncolored vertices.


Similar, we consider the algorithm COMB. COMB works exactly like FLOW, however, instead of solving a flow problem in step~\eqref{solveflow} of Algorithm~\ref{alg:prune}, we evaluate the pruning rules~\eqref{f1}, \eqref{eq:hall--}, and \eqref{f3}. Thereby, to achieve a fast runtime, we (only) consider sets $T_-$ and $T_+$ including a \textit{single} or \textit{all but one} color. 


This way, FLOW contains the strongest conditions but requires the solution of flow problems at every node of the search tree whereas COMB contains only a subset of conditions which can be tested arithmetically.

For all our tests, we assume that upper ($\bar{k}$) and lower bounds ($\underline{k}$) on $\chi_{eq}(G)$ are precomputed. In cases where $\underline{k}=\bar k$, we say that both algorithms solve the instance in $0$ seconds. Both algorithms are initialized with a partial coloring of $G$, obtained by (greedily) coloring a maximal clique.

For both, the FLOW and the COMB algorithm, the extended pruning rules are inserted in Step~\ref{choosei} in the description of EQDSATUR, i.e., at first the pruning rule given by Theorem~\ref{firstprun} is evaluated and if no pruning occurs, the pruning rules derived from the {\it mixed approach} are evaluated. By this setup, every instance solved by FLOW resp. COMB will require at most as many nodes in the search tree as if solved by STD. 

We employ two groups of test instances. The first one is made up from random instances, for the second group we consider instances from the DIMACS challenge. The random instances are generated according to the Erd\H{o}s-R\'enyi graph model $G(n,p)$ model for 
\begin{align*}
n\in\left\{40+5i\mid i=0,\ldots,6\right\} \text{ and } p\in\left\{0.1\cdot i\mid i=1,\ldots,9\right\}.
\end{align*}
For each combination of $n$ and $p$, we generate a class of $200$ test instances. We do not regard smaller instances as the running times are mostly insignificant below $n=40$. For the case $n>70$ the numbers of the instances which exceed the time limit becomes too large, such that these are neglected as well.
\bigskip\\
For all our computations, we employ an Intel(R) Core(TM) i7-3770 CPU @ 3.40 GHz with 32 GB RAM and a time limit of $3,600$ seconds. For all algorithms, we developed our own C++ code. The 
flow problems are solved by the Push Relabel algorithm by Goldberg (cf.~\cite{goldberg1985new}) as provided in the Boost Graph Library for C++~\cite{boost_graph}. 
Note that flow problems with lower bounds on the flow values can be solved by {\it two} max flow problems (see e.g. 
\cite{AMO:1993}). The code for STD was used as a basis for the code for the other algorithms, providing comparability among both algorithms.
\bigskip\\
The remainder of this section is structured as follows. At first, in Subsection~\ref{results1}, we discuss the results of the random instances with $n\leq 65$. Due to the size of this testbed ($10,800$ instances), we consider this as our most significant study. In Subsection~\ref{results4}, we comment on results of larger instances ($n=70$). In the following, in Subsection~\ref{results2}, we focus on specific instances from this test set (with $n=60$) and evaluate more economic interpretations of our pruning rule, i.e, evaluating the pruning rules every $x^{th}$ nodes, etc. Finally, in Subsection~\ref{results3}, we focus on the results of the DIMACS instances.

\subsection{Random Graphs} \label{results1}
\begin{table}
\caption{Solution time, number of instances meeting the time limit, and number of nodes in the  tree of STD, FLOW and COMB for the random instances.}
\label{tab:all}
\setlength{\tabcolsep}{2pt}
\renewcommand{\arraystretch}{0.75}
\begin{center}
{\footnotesize 
\begin{tabular}{l l r r r r r r r r r}
\toprule
\multirow{2}{*}{\textbf{}} & \multirow{2}{*}{\textbf{p}} & \multicolumn{3}{c}{\textbf{Time (s)}} & \multicolumn{3}{c}{\textbf{\# Timeout}} & \multicolumn{3}{c}{\textbf{\# Nodes}} \\
\cmidrule(lr){3 - 5}\cmidrule(lr){6-8}\cmidrule(lr){9-11}
& & STD & FLOW & COMB & STD & FLOW & COMB & STD & FLOW & COMB \\
\midrule
\multirow{9}{*}{\rotatebox[origin=c]{90}{n=40}}&0.1&0.0&0.0&0.0&0&0&0&455.6&80.8&86.7\\
 &0.2&0.0&0.0&0.0&0&0&0&590.6&156.1&191.3\\
 &0.3&0.0&0.0&0.0&0&0&0&5269.2&448.9&565.2\\
 &0.4&0.0&0.0&0.0&0&0&0&37373.8&509.3&627.6\\
 &0.5&2.7&0.1&0.0&0&0&0&2773741.9&1241.4&1487.0\\
 &0.6&2.8&0.1&0.0&0&0&0&3008095.2&1476.2&1813.8\\
 &0.7&0.2&0.0&0.0&0&0&0&185451.2&1030.2&2747.0\\
 &0.8&0.0&0.0&0.0&0&0&0&709.8&281.3&523.0\\
 &0.9&0.0&0.0&0.0&0&0&0&5928.1&40.8&249.5\\
\multicolumn{2}{c}{\textbf{Avg.}} &\textbf{0.6}&\textbf{0.0}&\textbf{0.0}&\textbf{0.0}&\textbf{0.0}&\textbf{0.0}&\textbf{668,623.9}&\textbf{585.0}&\textbf{921.2}\\
\midrule
\multirow{9}{*}{\rotatebox[origin=c]{90}{n=45}}&0.1&0.0&0.0&0.0&0&0&0&1908.2&263.1&308.3\\
 &0.2&0.0&0.0&0.0&0&0&0&13198.2&174.6&215.2\\
 &0.3&0.0&0.0&0.0&0&0&0&3402.2&535.6&589.7\\
 &0.4&21.9&18.2&4.9&0&1&0&8866647.0&6781.2&8782.8\\
 &0.5&11.7&0.2&0.0&0&0&0&10992516.2&4268.7&4909.9\\
 &0.6&18.3&0.2&0.0&1&0&0&284477.5&2896.6&3611.6\\
 &0.7&28.4&0.1&0.0&0&0&0&24350071.8&2019.2&5033.9\\
 &0.8&0.0&0.1&0.0&0&0&0&21546.9&1003.8&2150.5\\
 &0.9&0.5&0.0&0.1&0&0&0&424776.5&92.7&19766.0\\
\multicolumn{2}{c}{\textbf{Avg.}} &\textbf{9.0}&\textbf{2.1}&\textbf{0.6}&\textbf{0.1}&\textbf{0.1}&\textbf{0.0}&\textbf{4,995,393.8}&\textbf{2,003.9}&\textbf{5,040.9}\\
\midrule
\multirow{9}{*}{\rotatebox[origin=c]{90}{n=50}}&0.1&0.0&0.0&0.0&0&0&0&2030.0&234.7&243.9\\
 &0.2&0.0&0.0&0.0&0&0&0&2840.7&567.0&716.4\\
 &0.3&1.1&0.3&0.0&0&0&0&1351642.0&6750.5&8113.2\\
 &0.4&0.3&0.4&0.1&0&0&0&313280.7&5887.0&6453.0\\
 &0.5&8.8&2.0&0.3&0&0&0&7991515.2&35808.6&41327.3\\
 &0.6&70.4&0.9&0.2&3&0&0&16254662.0&13345.3&15825.2\\
 &0.7&125.8&0.8&0.6&5&0&0&31348814.6&11962.5&98823.1\\
 &0.8&0.1&0.3&0.3&0&0&0&62195.5&3445.3&19294.1\\
 &0.9&4.1&0.0&0.8&0&0&0&3177683.6&459.4&212998.6\\
\multicolumn{2}{c}{\textbf{Avg.}} &\textbf{23.4}&\textbf{0.5}&\textbf{0.3}&\textbf{0.9}&\textbf{0.0}&\textbf{0.0}&\textbf{6,722,740.5}&\textbf{8,717.8}&\textbf{44,866.1}\\
\midrule
\multirow{9}{*}{\rotatebox[origin=c]{90}{n=55}}&0.1&0.0&0.0&0.0&0&0&0&1420.5&170.9&214.6\\
 &0.2&18.0&18.1&4.5&1&1&0&4986.1&1700.1&2005.0\\
 &0.3&0.0&0.3&0.0&0&0&0&6158.1&4165.0&5136.4\\
 &0.4&1.6&6.3&0.9&0&0&0&1347000.0&110532.8&121321.7\\
 &0.5&22.0&24.5&22.3&1&1&1&3163743.9&107644.0&1468752.4\\
 &0.6&6.8&3.6&7.4&0&0&0&5027920.1&47315.7&1452891.4\\
 &0.7&355.6&21.5&54.6&18&1&2&27969583.3&45276.3&5517768.0\\
 &0.8&0.5&0.9&0.9&0&0&0&260490.8&10760.7&51901.5\\
 &0.9&150.2&0.1&4.3&4&0&0&59065414.0&924.2&953949.7\\
\multicolumn{2}{c}{\textbf{Avg.}} &\textbf{61.6}&\textbf{8.4}&\textbf{10.5}&\textbf{2.7}&\textbf{0.3}&\textbf{0.3}&\textbf{10,760,746.3}&\textbf{36,498.9}&\textbf{1,063,771.2}\\
\midrule
\multirow{9}{*}{\rotatebox[origin=c]{90}{n=60}}&0.1&0.0&0.0&0.0&0&0&0&4939.3&660.4&816.6\\
 &0.2&0.0&0.3&0.0&0&0&0&32930.1&5757.9&7477.2\\
 &0.3&0.0&1.2&0.2&0&0&0&21821.2&14859.4&15582.7\\
 &0.4&20.3&25.3&19.2&1&1&1&1888033.8&142547.2&203785.6\\
 &0.5&93.6&50.7&15.1&4&1&0&14972786.2&460700.9&537577.0\\
 &0.6&264.3&25.1&7.0&10&0&0&61405164.3&358795.3&927542.2\\
 &0.7&286.1&16.6&22.8&14&0&1&26583895.8&198475.6&264492.5\\
 &0.8&69.9&21.2&21.2&2&1&1&19784313.1&36983.3&167645.9\\
 &0.9&381.2&0.6&18.4&20&0&1&15728760.9&8988.0&19887.0\\
\multicolumn{2}{c}{\textbf{Avg.}} &\textbf{123.9}&\textbf{15.7}&\textbf{11.5}&\textbf{5.7}&\textbf{0.3}&\textbf{0.4}&\textbf{15,602,516.1}&\textbf{136,418.7}&\textbf{238,311.9}\\
\midrule
\multirow{9}{*}{\rotatebox[origin=c]{90}{n=65}}&0.1&0.0&0.0&0.0&0&0&0&10189.7&726.7&907.8\\
 &0.2&0.0&0.3&0.1&0&0&0&23489.7&5005.2&6754.3\\
 &0.3&8.9&7.7&1.0&0&0&0&9065186.1&93530.0&105290.0\\
 &0.4&4.2&61.8&8.9&0&0&0&2756575.9&721947.8&768299.8\\
 &0.5&130.5&266.3&69.1&5&2&1&26782892.2&2697309.7&3127829.9\\
 &0.6&208.1&194.5&38.6&10&0&0&17936777.2&1981494.8&2216573.8\\
 &0.7&243.6&95.3&31.9&12&0&0&17971417.9&993957.4&1774304.9\\
 &0.8&139.7&17.9&127.3&3&0&3&42983183.5&181851.7&5421474.2\\
 &0.9&256.2&1.7&1.3&14&0&0&3008433.6&18286.5&37857.0\\
\multicolumn{2}{c}{\textbf{Avg.}} &\textbf{110.1}&\textbf{71.7}&\textbf{30.9}&\textbf{4.9}&\textbf{0.2}&\textbf{0.4}&\textbf{13,393,127.3}&\textbf{743,790.0}&\textbf{1,495,476.9}\\
\midrule
\multicolumn{2}{c}{\textbf{Avg.}} &\textbf{54.8}&\textbf{16.4}&\textbf{9.0}&\textbf{2.4}&\textbf{0.2}&\textbf{0.2}&\textbf{8,690,524.7}&\textbf{154,669.0}&\textbf{474,731.3}\\
\bottomrule
\end{tabular}
}
\end{center}
\end{table}

For the detailed results, we refer to Table~\ref{tab:all}. In this table, we present three key figures to evaluate the success of the additional pruning rules, i.e., of the algorithms FLOW and COMB.
\bigskip\\
At first, for each algorithm, we report the average runtime (Time) of each class of instances. Hereby, each instance which cannot be solved within the time limit accounts for $3,600$ seconds of computation time. This way, the total average time multiplied with $10,800$ yields the overall time required to deal with all instances. Obviously, the lower this value, the better. Additionally, we state the number of instances which \textit{cannot} be solved within the time limit per algorithm. At last, we report the average number of nodes in the search tree required per algorithm. Hereby, the nodes of an ``unsolved'' instance are left out in the averages of \textit{all} algorithms to allow a comparison of the required ``nodes to optimality'' of each algorithm. 
\bigskip\\
We observe that the additional pruning rules are very strong, having a dramatic effect on all three key figures of the DSATUR algorithms. Over all classes/instances, even a straightforward implementation of the exact pruning scheme (FLOW) can reduce the runtime from $54.8$ seconds down to $16.4$ seconds (STD) on average. This decrease is enabled by a drastic decrease in the node count since STD requires, on average, $56$ \textit{times} more nodes than FLOW (to solve an instance to optimality). However, it is clear that the time required per node in the search tree of FLOW is larger than the one of STD, as the former has to solve multiple flow problems in each of them and has to sustain additional data structures for evaluating these problems.

In comparison to FLOW, COMB is more lightweight, which results in a further decrease in solution time, down to $9.0$ seconds per instance on average. Since COMB only evaluates a part of the criterion's of the flow problem, it requires more nodes than FLOW, but this trade-off appears to be beneficial for the overall solution time.

A further improvement of FLOW, respectively COMB over STD is the greatly increased stability of the algorithm. While STD cannot solve, on average $2.4$ instances (out of $200$) within the time limit, both extended algorithms can solve all but $0.2$ of the instances. This is especially remarkable for the instance class $n=60$ and $p=0.9$, for which STD cannot solve $20$ instances whereas FLOW (COMB) can solve all (but one of) the instances. 
\bigskip\\
While, in general, the extended algorithms can greatly improve on STD, we remark that this trend does not hold for every instance class. For example, for $n=65$ and $p=0.4$, both extended algorithms require more time ($61.8$, respectively $8.9$ seconds) compared to STD ($4.2$ seconds). Nevertheless, the overall improvements are substantial, indicating the strength and the importance of the pruning rules described in this work.

\subsection{Large Instances} \label{results4}

In this subsection, we give some concluding comments on how our algorithms behave for larger instances. In particular, we consider random instances with $n=70$. For detailed results, we refer to Table~\ref{tab:seventy}. What we can see, in comparison to the smaller instances, compare Table~\ref{tab:all}, is that FLOW is not faster than STD anymore, whereas COMB is still the best algorithm. This is due to the problem size, which requires larger flow problems to be initialized and to be solved which is relatively more time consuming. However, COMB still outperforms the other two algorithms, both with respect to time and with respect to the number of solved instances. We refer to Figure~\ref{fig:risingn} for a visualization for the average solution time for increasing problem size, pointing out that COMB is always the fastest algorithm.
\begin{figure}
\pgfplotstableread[row sep=\\,col sep=&]{
nn 	&	STD	&	FLOW	&	COMB  \\
40	&	0.6	&	0.0	&	0.0 \\
45	&	9.0	&	2.1	&	0.6 \\
50	&	23.4	& 	0.5	& 	0.3 \\
55	&	61.6	&	8.4	&	10.5 \\
60	& 	123.9	&	15.7	&	5.7 \\
65	&	110.1	&	71.7	& 	30.9 \\
70	&	240.5	&	320.3	& 106.4 \\ 
}\mydata

\begin{tikzpicture}[scale=0.95]

    \begin{axis}[
            width=\textwidth,
            height=.5\textwidth,
            legend pos = north west,
            legend cell align=left,
	    every axis x label/.style={at={(current axis.south)},below=0.75cm},
            xtick=data,
            xlabel={Node size of the random instances.},
            ymin=0,ymax=350,
            ymajorgrids = true,
            ylabel={Average Solution Time (s)},
        ]
	\addplot[ thick,mark options={fill=blue!50}, mark=square*,mark size=3pt] table[x=nn,y=STD]{\mydata};
	\addplot[ thick,mark options={fill=red!50}, mark=square*,mark size=3pt] table[x=nn,y=FLOW]{\mydata};
	\addplot[ thick,mark options={fill=green}, mark=square*,mark size=3pt] table[x=nn,y=COMB]{\mydata};
        \legend{STD,FLOW,COMB}
    \end{axis}
\end{tikzpicture}
\caption{Average solution time of STD, FLOW and COMB for increasing problem size $n\in\{40,\ldots,70\}$.}
\label{fig:risingn}
\end{figure}
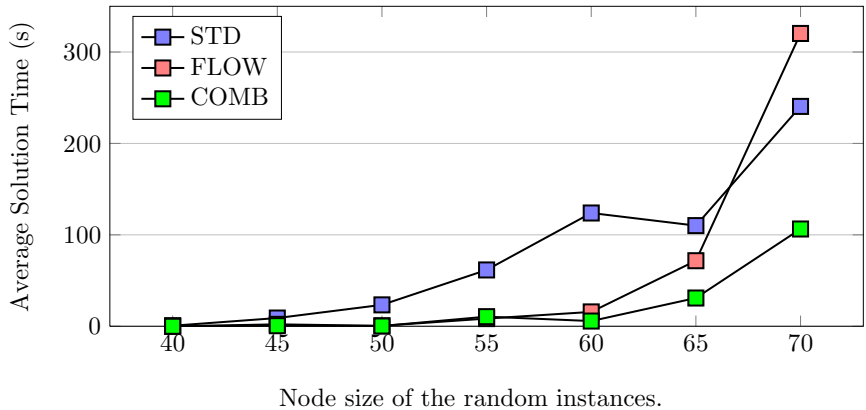
\bigskip\\
In the case that even larger instances are to be considered, also compare the results on the DIMACS instances in Subsection~\ref{results3}, the algorithms have to be adapted as the additional overhead due to the extended pruning rules rises. One way of doing so is by updating the pruning schemes lazily, as described in the next Subsection. 

In general, when the problem size increases, the necessary data structures and the time spent updating and searching in these increases as well. This way, custom implementations, especially tailored for larger networks become necessary. In this context, COMB could also be adapted to be even more lightweight by not considering the Pruning Rule~\eqref{f3} as this rule requires substantially more effort, from a data lookup perspective, than the other rules. All in all, the larger the problems become, the less likely it is that a single variant/implementation of an algorithm is equally well fit for both, small and very large problem sizes. Naturally, the same arguments apply to other parameters such as graph density as well, the more specific a certain testbed is, the more worthwhile is a custom implementation of an algorithm.

\begin{table}
\caption{Statistics for the random instances with $n=70$.}
\label{tab:seventy}
\setlength{\tabcolsep}{2pt}
\renewcommand{\arraystretch}{0.75}
\begin{center}
{\footnotesize 
\begin{tabular}{l l r r r r r r r r r}
\toprule
\multirow{2}{*}{\textbf{}} & \multirow{2}{*}{\textbf{p}} & \multicolumn{3}{c}{\textbf{Time (s)}} & \multicolumn{3}{c}{\textbf{\# Timeout}} & \multicolumn{3}{c}{\textbf{\# Nodes}} \\
\cmidrule(lr){3 - 5}\cmidrule(lr){6-8}\cmidrule(lr){9-11}
& & STD & FLOW & COMB & STD & FLOW & COMB & STD & FLOW & COMB \\
\midrule
 &0.1&0.1&0.1&0.0&0.0&0.0&0.0&106,795.1&4,809.9&6,041.6\\
 &0.2&0.0&0.4&0.1&0.0&0.0&0.0&31,006.6&5,109.0&7,570.1\\
 &0.3&0.5&21.4&2.8&0.0&0.0&0.0&320,911.9&222,538.0&237,933.1\\
 &0.4&72.6&283.8&48.1&2.0&1.0&0.0&23,949,135.2&2,716,385.9&2,822,804.6\\
 &0.5&390.9&533.1&129.9&17.0&5.0&2.0&59,033,446.8&4,557,973.7&4,894,282.7\\
 &0.6&443.5&1,468.1&422.0&16.0&31.0&2.0&44,720,718.5&9,962,971.2&11,377,565.5\\
 &0.7&275.0&472.9&217.0&11.0&2.0&0.0&37,178,971.1&3,823,696.4&6,857,377.4\\
 &0.8&872.9&93.8&112.9&37.0&2.0&2.0&127,179,557.7&520,247.9&2,337,771.0\\
 &0.9&108.6&8.8&24.7&6.0&0.0&1.0&295,475.8&83,459.3&165,311.7\\
\multicolumn{2}{c}{\textbf{avg.}}&\textbf{240.5}&\textbf{320.3}&\textbf{106.4}&\textbf{9.9}&\textbf{4.6}&\textbf{0.8}&\textbf{32,535,113.2}&\textbf{2,433,021.3}&\textbf{3,189,628.6}\\
\bottomrule
\end{tabular}
}
\end{center}
\end{table}

\subsection{Lazy Application} \label{results2}

In the previous subsection, we have observed that a DSATUR based algorithm for the ECP greatly benefits from additional pruning rules. However, such pruning rules induce additional effort on the computational side, i.e., through additional data structures which have to be updated or through the flow problems which have to be solved. In this context, a natural question is whether the runtime of our extended algorithms can benefit of a lazy application of such pruning schemes. We elaborate on this in the following.
\bigskip\\
In general, the additional time investment of the pruning rules depends on two factors. The first one is the repeated search for an improving clique decomposition (CD, compare Algorithm~\ref{alg:nonadjcliques}), the second one is creation and solution of the flow problems, respectively the evaluation of the combinatorial pruning rules. In the setting presented above, both actions are performed once per node of the  tree. Therefore, reasonable alternatives are to execute each of the actions only at every $x^{th}$ ($x>1$) node in the search tree. 

However, computational experiments show that evaluating the pruning scheme, that is the combinatorial pruning rules (as in COMB) or the flow problem (as in FLOW) in such lazy manner significantly worsens the runtime of our algorithms and even leads to solving fewer instances in the time limit. 

Contrasting this, looking for an improving CD, i.e., executing Algorithm~\ref{alg:nonadjcliques}, only in every second, third or fifth node can improve the runtime of our algorithms. We refer to Table~\ref{tab:lazy} for an evaluation of the modified COMB algorithm for the random instances with $n=60$. Note that we do not regard the four instances which cannot be solved by COMB, thus the figures for COMB are different to the ones in Table~\ref{tab:all}. All remaining instances are solved by all the variants of COMB. As we can see in Table~\ref{tab:lazy}, a lazy application of the CD decreases the runtime of COMB.  This trend is more pronounced, the ``lazier'' the CD is updated. E.g., for $x=2$ (that is updating the CD at every second node in the  tree), the runtime decreases by $25\%$ whereas for $x=5$, the reduction exceeds 30 percent. The time saved in calculating the new CD is well worth the increase ($26\%$) in the number of nodes in the search tree when working with a potentially worse CD. We point out that the same holds for FLOW, but to a lesser extend. Note that we did not evaluate any further variants for $x>5$ as we speculate that, at some point, depending on the problem sizes, the improvements diminish and the algorithm becomes slower again.
\bigskip\\
As mentioned above, evaluating the pruning scheme in a lazy manner has a negative effect on the overall runtime of the algorithms. Combining this with the lazy update of the CD still has an overall negative impact. Thus, the ``best'', with respect to runtime, variation of the STD algorithm is to extend it with the combinatorial pruning rules and to apply a lazy update of the CD while explicitly evaluating the pruning rules at every node in the search tree.

\begin{table}
\caption{Variants of the COMB algorithm in which the \textbf{clique decomposition} is only updated every $x^{th}$ node in the  tree (random graphs, $n=60$).}  
\label{tab:lazy}
\setlength{\tabcolsep}{4pt}
{\footnotesize
\begin{center}
\begin{tabular}{l rrrrr rrrr}
\toprule
\multirow{2}{*}{\textbf{p}}&\multicolumn{4}{c}{\textbf{Time (s)}} & \multicolumn{4}{c}{\textbf{\# Nodes}} \\\cmidrule(lr){2-5}\cmidrule(lr){6-9}
 & \multirow{2}{*}{COMB} & \multicolumn{3}{c}{$x^{th}$ node} & \multirow{2}{*}{COMB} & \multicolumn{3}{c}{$x^{th}$ node} \\\cmidrule(lr){3-5}\cmidrule(lr){7-9}
 &  & \multicolumn{1}{c}{2} & \multicolumn{1}{c}{3} & \multicolumn{1}{c}{5} &  & \multicolumn{1}{c}{2} & \multicolumn{1}{c}{3} & \multicolumn{1}{c}{5}\\
\midrule
0.1&0.0&0.0&0.0&0.0&816.6&1,252.8&1,277.7&1,381.8\\
0.2&0.0&0.0&0.0&0.0&7,477.2&8,323.6&8,371.6&8,420.0\\
0.3&0.2&0.1&0.1&0.1&15,582.7&15,664.9&15,855.1&15,905.5\\
0.4&1.2&1.2&1.2&1.2&203,785.6&289,188.8&317,299.6&338,332.9\\
0.5&15.1&12.4&11.6&11.2&1,830,601.6&2,104,865.1&2,282,373.1&2,454,353.4\\
0.6&7.0&5.3&4.8&4.4&894,788.7&914,285.7&922,575.6&930,235.8\\
0.7&4.8&3.2&2.7&2.3&256,820.7&265,415.6&273,660.7&290,455.2\\
0.8&3.3&2.2&1.8&1.5&166,817.1&168,614.1&170,398.9&172,889.7\\
0.9&0.4&0.3&0.3&0.2&18,270.1&18,461.5&18,694.5&19,028.6\\
\textbf{Avg.}&\textbf{3.6}&\textbf{2.7}&\textbf{2.5}&\textbf{2.3}&\textbf{377,127.1}&\textbf{420,535.5}&\textbf{445,469.9}&\textbf{469,957.9}\\
\bottomrule
\end{tabular}
\end{center}
}
\end{table}

\subsection{DIMACS Instances} \label{results3}

In this subsection, we briefly discuss the behavior of our algorithms on the instances of the DIMACS challenge. All results of instances solvable within a time limit of $7,200$ seconds ($24$ in total) are displayed in Table~\ref{tab:dimacs}, all remaining instances cannot be solved by any of the three algorithms within the time limit. The DIMACS instances are particularly difficult for (standard) graph coloring problems and, apparently, they are also difficult for the ECP since only a fraction of the instances can be solved. We point out that in some cases, all algorithms require the same amount of nodes in the search tree, indicating the weakness of the (additional) pruning rules.
\bigskip\\
What we can observe in the table is that, on all instances, FLOW respectively COMB requires $7,291$ respectively $4,115$ seconds in total (excluding the $7,200$ seconds per unsolved instance), in comparison to the $10,123$ seconds required by STD. It is important to note that each of STD and FLOW cannot solve a different group of five instances. COMB cannot solve six instances. In total, COMB cannot solve all instances FLOW can solve but there are instances which COMB can solve but FLOW cannot. However, in contrast to the previous section, the FLOW algorithm seems to be preferable in this setting. 
\bigskip\\
However, the picture is mixed as all algorithms struggle with different and large parts of the instances. We believe that the weakness of FLOW, respectively COMB, in comparison to the findings in the previous section is due to the size and the low density of the regarded instances. 
In the case of large and sparse instances, the additional data structures required by FLOW (COMB) are rather time consuming to maintain. At the same time they yield little benefit as the pruning scheme is rather weak. 

All in all, it is clear that for the DIMCAS instances, more sophisticated implementations of the pruning rules become necessary as the feasibility of ``one size fits all'' approaches diminishes due to the problem size. Still, we believe that additional pruning rules are central for a successful algorithm regarding these instances.

\begin{table}
\caption{Statistics for the instances from the DIMCAS challenge ($p$ denotes the density of the instances).}
\label{tab:dimacs}
\begin{center}
\setlength{\tabcolsep}{2pt}
\renewcommand{\arraystretch}{0.75}
{\footnotesize
\begin{tabular}{l rr rrr rrr}
\toprule
\multirow{2}{*}{\textbf{Name}}&\multirow{2}{*}{$|V|$}&\multirow{2}{*}{p}&\multicolumn{3}{c}{\textbf{Time (s)}}&\multicolumn{3}{c}{\textbf{\# Nodes}}\\\cmidrule(lr){4-6}\cmidrule(lr){7-9}
&&&STD&FLOW&COMB&STD&FLOW&COMB\\
\midrule
1-FullIns\_3&30&0.23&0.0&0.0&0.0&842&480&679\\
1-Insertions\_4&67&0.10&3,243.5&-&-&2,550,140,072&-&-\\
2-FullIns\_3&52&0.15&69.6&462.7&118.1&68,439,880&44,908,973&57,865,454\\
2-Insertions\_3&37&0.11&0.0&0.5&0.1&20,240&15,353&15,420\\
3-Insertions\_3&56&0.07&6.9&183.4&28.3&7,016,020&5,579,518&5,619,199\\
DSJC125.1&125&0.09&2.3&3.1&1.0&1,386,303&111,546&258,735\\
fpsol2.i.1&469&0.09&0.1&1.4&0.7&5,741&5,741&5,741\\
inithx.i.1&864&0.05&-&20.2&-&-&40,604&-\\
le450\_15a&450&0.08&-&6,061.6&-&-&7,086,175&-\\
le450\_25a&480&0.07&-&0.7&0.2&-&3,860&3,860\\
miles1000&128&0.79&-&0.1&-&-&661&-\\
miles1500&128&0.64&0.0&0.0&0.0&10&5&7\\
mug100\_1&100&0.03&3,867.5&-&-&4,159,806,706&-&-\\
mug88\_1&88&0.04&840.2&-&2,096.6&1,014,959,534&-&842,473,020\\
mug88\_25&88&0.04&537.8&-&1,779.8&587,847,226&-&463,037,978\\
mulsol.i.1&197&0.20&0.0&0.2&0.1&1,264&1,264&1,264\\
myciel4&23&0.28&0.0&0.0&0.0&962&704&802\\
myciel5&47&0.22&0.5&13.0&1.6&385,726&267,726&269,610\\
queen6\_6&36&0.46&0.0&0.0&0.0&994&655&713\\
queen7\_7&49&0.40&0.0&0.5&0.1&10,352&8,131&8,656\\
queen8\_8&64&0.36&22.0&543.6&88.7&9,842,600&7,315,675&7,920,200\\
queen9\_9&81&0.65&1,532.9&-&-&583,479,058&-&-\\
school1\_nsh&352&0.24&-&0.7&0.4&-&918&918\\
zeroin.i.1&211&0.19&0.0&0.2&0.1&1,857&1,796&1,857\\
\bottomrule
\end{tabular}
}
\end{center}
\end{table}

\section{Conclusion}\label{sec:conclusion}

In this work, we have presented a flow based scheme for the generation of pruning rules for EQDSATUR. The scheme includes state of the art pruning rules as presented in~\cite{MNS:2014b} and extends them. The pruning decision is encoded in the flow problem, based on which further \textit{combinatorial} pruning rules (formulae) have been devised.

To evaluate our results, we added the new pruning rules to the EQDSATUR algorithm. We considered two variants, in the first, the flow problem is evaluated explicitly (FLOW), in the second we only consider a selection of combinatorial pruning rules (COMB). In our experiments, we have observed that, even a naive implementation of the pruning scheme via a flow problem (FLOW) is already sufficient for time competitiveness. However, an algorithm which relies on the combinatorial pruning rules (COMB) usually outperforms the naive approach, yielding the overall fastest algorithm. In all our random test instances COMB was the fastest, most stable algorithm. However, the DIMACS instances prove to be very difficult, respectively too large, for the ECP. There, the additional overhead of the pruning rules could not pay off, such that COMB, respectively FLOW could not significantly improve over the standard EQDSATUR algorithm (STD). In any case, the potential in the reduction of necessary nodes in the search tree is tremendous. 
\bigskip\\
All in all, the here presented pruning rules greatly expand the ones already known in the literature and can benefit the existing algorithmic approaches. That is, they can directly be incorporated into enumerative algorithms and can as well be used as cutting planes for MILP approaches, compare~\cite{MNS:2014a}. We advise to use them wherever applicable. However, with respect to the enumerative algorithms considered here, for large instances, more sophisticated implementations become necessary to avoid too costly operations for evaluating the pruning rules.
\bigskip\\

{\small \it
\textbf{Acknowledgement }
This work is partially supported by the German Federal Ministry of Education and Research (BMBF grant 05M13PAA, joint project 05M2013 - VINO: Virtual Network Optimization) as well as the Undergraduate Funds of the Excellence Initiative.

We thank our student assistants Sven F\"orster and Duc Thanh Tran for their work, especially regarding implementations and testing.
}

\end{document}